\newtheorem{thm}{Theorem}
\newtheorem{rk}{Remark}
\newtheorem{prop}{Proposition}
\newtheorem{clly}{Corollary}
\newtheorem{lemma}{Lemma}
\newtheorem{defi}{Definition}
\newcommand{\R}{{\mathbb{R}}}
\newcommand{\Z}{{\mathbb{Z}}}
\newcommand{\N}{{\mathbb{N}}}
\newcommand{\D}{{\mathbb{D}}}
\newcommand{\fix}{Fix}
\newcommand{\out}{out}
\newcommand{\inn}{inn}
\newcommand{\fil}{Fill}
\newcommand{\ind}{Ind}
\title{On the growth rate inequality for periodic points in the two sphere}
\author{G. Honorato, J. Iglesias, A. Portela, \'A. Rovella, F. Valenzuela and J. Xavier}
\begin{document}
\maketitle
 \section{Introduction}

This paper deals with the following open problem:  let $f: S^2 \to S^2$ be a continuous map of degree $d$, $|d|>1$, and let $N_nf$ denote the number of fixed points of $f^n$. When does 
the growth rate inequality

\begin{equation}\label{tasa}
\limsup \frac{1}{n} \log N_nf\geq \log d.
\end{equation}
\noindent  hold for $f$? (This is Problem 3 posed in \cite{shub2}).

In the case equation (\ref{tasa}) holds, we say that $f$ 
{\em has the rate}.\\ There are many counterexamples to this. The simplest one is the map expressed in polar coordinates in $\R^2$ as 
$(r,\theta)\to (dr,d\theta)$ and extended to the sphere with $\infty\to \infty$. It has degree $d$ and just two periodic points.  More interesting examples (where
the nonwandering set is not reduced to the set of periodic points) can be found in \cite{iprx2}.  On the other hand, the inequality is known to hold if $f$ is a rational map \cite{shub}, 
if $f$ is a branched covering  having a 
completely invariant simply connected region $R$ with locally connected boundary without only one critical fixed point in $\partial R$ \cite{bcs},
if $f$ is $C^1$ and preserves
the latitude foliation \cite{ps} and \cite{mis}, if $f$ is $C^1$ and longitudinal \cite{gmn}, if the critical points form a two-periodic cycle \cite{iprx3}, and if all periodic orbits are
isolated as invariant sets and $f$ has no sources of degree
$r$, $|r|\geq 1$ \cite{lhc}.  \\

We prove the following:\\

\noindent
{\bf Theorem A.}
{\em Let $f:S^2\to S^2$ be a continuous map such that $\deg f = d, |d|>1$.  Suppose $f$ has two attracting fixed points denoted $N$ and $S$ and let $A=S^2\setminus \{N,S\}$.
Assume that if a loop $\gamma\subset f^{-1}(A)$ is homotopically trivial in $A$, then $f(\gamma)$ is also 
homotopically trivial in $A$.
Then $f$ has the rate.}\\

Note that $A$ is an annulus whose preimage under $f$ 
is contained in $A$ but whose image under $f$ is not necessarily contained in $A$ because $N$ and $S$ may have other preimages appart from themselves. 
This result generalizes Pugh-Shub's theorem in \cite{ps} and also the bidimensional result in \cite{gmn}, as if $f$ preserves the latitude or longitudinal foliation, then the hypothesis of
Theorem A is trivially satisfied, as any loop 
loop $\gamma\subset f^{-1}(A)$  homotopically trivial in $A$ is also homotopically trivial in $f^{-1} (A)$.  If in addition $f$ is $C^1$ then the hypothesis on the fixed attractors 
(maybe for $f^2$)
is also satisfied (namely, the poles). In this way the strong hypothesis of preserving a foliation is dropped, replaced by an assumption on the homological action of the map 
$f: |_{f^{-1} (A)}: f^{-1} (A)\to A$, relating the long term dynamics of the endomorphism and its action on the algebraic topology.

As an example  not satisfying the hypothesis of Theorem A, even when having the two fixed attractors, consider $z\to z^2+c$ where 
$c\neq 0$ is small so that there is a fixed attracting point $S$ close to $0$ that is not critical. Then $N=\infty$ and $S$ are the unique fixed attractors 
and the preimage $U$ of $A$ equals the sphere minus three points $N$, $S$ and the preimage of $S$ that is not $S$, denote it $S'$. Now, a small Jordan curve
$ \gamma$ surrounding $S'$ is a curve in $U$ inessential in $A$ but $f(\gamma)$ is essential in $A$.  Note however that this example has the rate as all rational function do.

The proof of Theorem A relies in Nielsen Theory and Lefschetz index Theorem following previous work in \cite{iprx2} and \cite{iprx3}.

\

\section{Straightening $f$}

Recall that $A$ stands for the annulus $S^2\setminus\{N,S\}$ and $f:S^2\to S^2$is a continuous map for which we assume two additional hypothesis:

(H1) $\gamma\subset A\cap f^{-1}(A)$ is trivial in $A$ implies $f(\gamma)$ is trivial in $A$.\\
(H2) The points $N$ and $S$ are fixed points for $f$. They are also weak attractors, meaning that there exist disjoint open sets $V_N$ and $V_S$ such that $N\in V_N$, $S\in V_S$ and 
$f(\overline{V_N})\subset V_N$, $f(\overline{V_S})\subset V_S$ .

In this section we give some topological preliminaries, proving that there exists a map $g$ homotopic to $f$ such that each component of the preimage of $N$ and $S$ is either $N$, or $S$, or an essential set in $A$. A subset of $A$ is essential if it separates $N$ and $S$. A characterization is: a closed set $K\subset A$ is not essential (called {\em inessential}) iff there exists an arc joining $N$ and $S$ disjoint from $K$.\\

 For a closed plane curve $\gamma:[0,1]\to \R^2\setminus\{p\}$ define the index of $\gamma$ with respect to $p$ as 
$$
\ind_\gamma(p)=\frac{1}{2\pi i}(\tilde{\gamma}(1)-\tilde\gamma(0)) 
$$
where $\tilde \gamma$ is any lift of $\gamma$ under the covering map $E:\R^2\to \R^2\setminus\{p\}$ given in complex form as $E(z)=\exp(z)+p$.

Also denote by $\out(\gamma)$ the set of points $p$ such that $\ind_\gamma(p)=0$ and by $\inn(\gamma)$ the set of points $p$ such that 
$\ind_\gamma(p)\neq 0$.
Note that a point in a bounded component of the complement of the curve $\gamma$ may have zero index. A fact that characterizes when a point 
$p$ belongs to $\out(\gamma)$ is the existence of a homotopy of curves $\gamma_t:S^1\to\R^2\setminus\{p\}$ such that $\gamma_0=\gamma$ and $\gamma_1$ is constant.

Now take a curve $\gamma$ contained in $A$, and think of $\gamma$ as a curve in the plane, via the homeomorphism $S^2\setminus\{N\}\to \R^2$. 
Then $\gamma$ is homotopically trivial in $A$ iff $S\in\out(\gamma)$.
If $\gamma$ is inessential it does not matter if we choose $S$ instead of $N$ and consider $\gamma$ as a subset of $S^2\setminus\{S\}$, because the index changes the sign but not its absolute value, so $\inn(\gamma)$ remains the same set.

The purpose of this section is to remove inessential components of the preimage of $N$ and $S$. To give an idea of what follows, 
assume that $N'\neq N$ is an isolated point in the preimage of $N$. $A$ is considered as the annulus $\R^2\setminus \{N\}$ where $\R^2=S^2\setminus\{S\}$, so the index of a curve is well defined. Take a simple closed curve $\gamma$ surrounding $N'$, that is, such that $\ind_\gamma(N')=1$. If $\gamma$ is sufficiently close to $N'$, then $\inn(\gamma)$ does not intersect preimages of $N$ or $S$ other than $N'$ itself. 
Then hypothesis (H1) implies that $f(\gamma)$ is a curve inessential in $A$. 
This means that $N\in\out(f(\gamma))$, which immediately implies (see Corollary \ref{c1} below), that there exists $g$ homotopic to $f$ such that $g=f$ in $\out(\gamma)$ and $g(\inn(\gamma))$ does not contain $N$. 
In other words, $g(\inn(\gamma))\subset A$. 

There is a problem, however: the set of periodic points may change if this is done without  extra care. Note that as $N$ is a weak attractor, then the map $g$ mentioned above may be different from $f$ only in a set of points close to the preimage $N'$ of $N$. It is easy to take $g$ such that $f(x)\neq g(x)$ implies that $g(x)$ is close enough to $N$ to assure that $g^n(x)\in V_N$. This shows that the procedure above does not create new periodic points outside $V_N$. 

This was just the idea of the procedure to remove {\em isolated} preimages of $N$.

$\D$ denotes the open unit disc in the plane, $\overline \D$ its closure and $S^1 = \partial \D$.
 
\begin{lemma}
\label{l1}
Let $\gamma:S^1\to \R^2$ be a closed curve and $p$ a point in $\out(\gamma)$. 
Then there exists a continuous $F=F_{\gamma,p}:\overline\D\to\R^2$ such that 
$F|_{S^1}=\gamma$ and $p \notin F(\D)$.
\end{lemma}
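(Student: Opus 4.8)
The plan is to reformulate the statement in terms of the standard universal covering of the punctured plane. Since $p\in\out(\gamma)$, by the characterization recalled just above the lemma there is a homotopy of curves $\gamma_t:S^1\to\R^2\setminus\{p\}$ with $\gamma_0=\gamma$ and $\gamma_1$ constant. The idea is simply that such a homotopy \emph{is} (after reparametrizing the homotopy variable to run over radii in $\overline\D$) the desired extension: define $F:\overline\D\to\R^2$ by $F(re^{i\theta})=\gamma_{1-r}(e^{i\theta})$ for $r>0$ and $F(0)$ equal to the constant value of $\gamma_1$. Then $F$ restricted to $S^1$ (i.e. $r=1$) is $\gamma_0=\gamma$, and $F(\D)=\bigcup_{0<r\le 1}\gamma_{1-r}(S^1)\subset\R^2\setminus\{p\}$ since every $\gamma_t$ avoids $p$; continuity at the origin follows because $\gamma_1$ is constant, so $F(re^{i\theta})\to\gamma_1(e^{i\theta})=F(0)$ uniformly as $r\to 0$.

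Thus the only real content is producing the nullhomotopy in $\R^2\setminus\{p\}$ from the hypothesis $p\in\out(\gamma)$, i.e.\ $\ind_\gamma(p)=0$. Here I would use the covering map $E:\R^2\to\R^2\setminus\{p\}$, $E(z)=\exp(z)+p$, exactly as in the definition of the index. Lift $\gamma$ to a path $\tilde\gamma:[0,1]\to\R^2$ (thinking of $S^1$ as $[0,1]/(0\sim 1)$ parametrized by $\theta\mapsto e^{2\pi i\theta}$ if convenient). The condition $\ind_\gamma(p)=0$ says precisely $\tilde\gamma(1)=\tilde\gamma(0)$, so $\tilde\gamma$ descends to a genuine loop based at $\tilde\gamma(0)$ in the plane $\R^2$, which is simply connected; hence $\tilde\gamma$ is nullhomotopic in $\R^2$ through loops $\tilde\gamma_t$ (for instance the straight-line homotopy $\tilde\gamma_t(\theta)=(1-t)\tilde\gamma(\theta)+t\,\tilde\gamma(0)$, which is a loop for each $t$ because $\tilde\gamma(1)=\tilde\gamma(0)$ forces $\tilde\gamma_t(1)=\tilde\gamma_t(0)$). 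Composing with $E$ gives $\gamma_t=E\circ\tilde\gamma_t$, a homotopy of loops in $\R^2\setminus\{p\}$ from $\gamma$ to the constant loop $E(\tilde\gamma(0))$.

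Assembling: take $F(re^{2\pi i\theta})=E\bigl((1-(1-r))\,\tilde\gamma(\theta)+(1-r)\,\tilde\gamma(0)\bigr)=\exp\bigl(r\,\tilde\gamma(\theta)+(1-r)\,\tilde\gamma(0)\bigr)+p$ for $r\in(0,1]$ and $F(0)=\exp(\tilde\gamma(0))+p$. This is manifestly continuous on $\overline\D\setminus\{0\}$, continuous at $0$ since the exponent tends to $\tilde\gamma(0)$ as $r\to 0$, agrees with $\gamma$ on $S^1$ (at $r=1$ the exponent is $\tilde\gamma(\theta)$, so $F=E\circ\tilde\gamma=\gamma$), and never hits $p$ because $E$ never takes the value $p$. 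One should check that this $F$ is well defined on $\overline\D$, i.e.\ that the formula in polar coordinates is single-valued at $\theta=0$ and $\theta=1$: this holds precisely because $\tilde\gamma(0)=\tilde\gamma(1)$.

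I do not expect a genuine obstacle here; the lemma is a soft topological fact. The only point requiring a little care is the continuity of $F$ at the center of the disc — where the polar-coordinate parametrization degenerates — and this is exactly where the hypothesis "$\gamma_1$ constant" (equivalently, $\ind_\gamma(p)=0$, equivalently $\tilde\gamma$ is a loop) is used: it guarantees the limiting value $F(0)$ does not depend on the direction $\theta$ of approach. An alternative, equally short, is to quote that $\overline\D$ is the cone on $S^1$ and that a map out of a cone on $X$ into $Y$ is the same as a nullhomotopy in $Y$ of a map $X\to Y$; then Lemma~\ref{l1} is just a restatement of $p\in\out(\gamma)$ via the homotopy characterization given above.
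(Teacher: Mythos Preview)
Your proof is correct and follows essentially the same approach as the paper: both use the nullhomotopy $\gamma_t$ in $\R^2\setminus\{p\}$ (guaranteed by the characterization of $\out(\gamma)$) and reparametrize it over radii to define $F$ on $\overline\D$. You are more careful than the paper---you explicitly construct the nullhomotopy via the covering $E$ and verify continuity at the origin---whereas the paper simply writes $F(t,\theta)=\gamma_{1-t}(\theta)$ and leaves these points implicit.
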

\begin{proof}  Recall that as $p$ belongs to $\out(\gamma)$ there exists a homotopy of curves $\gamma_t:S^1\to\R^2\setminus\{p\}$ such that $\gamma_0=\gamma$ and $\gamma_1$ is constant.
 For $t\in[0,1]$ and $\theta\in S^1$, take $F(t,\theta)=\gamma_{1-t}(\theta)$.
\end{proof}

\begin{lemma}
\label{l2}
Let $f:\overline{\D}\to \R^2$ be a continuous map; define $\gamma=f|_{S^1}$ and let $p$ be a point in  $\out(\gamma)$. Then there exists $g:\overline{\D}\to \R^2$ homotopic to $f$ such that $g=f$ in $S^1$ and $p\notin g(\D)$.
\end{lemma}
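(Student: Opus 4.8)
The plan is to take $g$ to be precisely the map furnished by Lemma \ref{l1}. Since $\gamma=f|_{S^1}$ is a closed curve in $\R^2$ and, by hypothesis, $p\in\out(\gamma)$, Lemma \ref{l1} applies and produces a continuous $F_{\gamma,p}\colon\overline\D\to\R^2$ with $F_{\gamma,p}|_{S^1}=\gamma=f|_{S^1}$ and $p\notin F_{\gamma,p}(\D)$. Set $g:=F_{\gamma,p}$. Then $g=f$ on $S^1$ and $p\notin g(\D)$ are immediate, so the only thing left to check is that $g$ is homotopic to $f$.

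For this I would simply invoke that the target $\R^2$ is convex, hence contractible, so that \emph{any} two continuous maps $\overline\D\to\R^2$ are homotopic. Concretely, the straight-line homotopy $H(x,t)=(1-t)f(x)+t\,g(x)$, for $x\in\overline\D$ and $t\in[0,1]$, is continuous and joins $H(\cdot,0)=f$ to $H(\cdot,1)=g$. This already establishes the statement as written.

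If, as is convenient for the applications (Corollary \ref{c1}), one wants the homotopy to fix the boundary, I would argue as follows. Because $f$ and $g$ agree on $S^1$, gluing them defines a continuous map $\Phi$ on $\partial(\overline\D\times[0,1])$: set $\Phi=f$ on $\overline\D\times\{0\}$, $\Phi=g$ on $\overline\D\times\{1\}$, and $\Phi(x,t)=\gamma(x)$ on $S^1\times[0,1]$; these three pieces are compatible along the rims $S^1\times\{0\}$ and $S^1\times\{1\}$ exactly because $f|_{S^1}=g|_{S^1}=\gamma$. Now $\overline\D\times[0,1]$ is homeomorphic to a closed $3$-ball, whose boundary is a $2$-sphere, and any continuous map from a $2$-sphere into the contractible space $\R^2$ extends continuously over the ball; any such extension $H\colon\overline\D\times[0,1]\to\R^2$ is then a homotopy from $f$ to $g$ with $H(x,t)=\gamma(x)$ for all $x\in S^1$.

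There is essentially no hard step here: the geometric content is entirely inside Lemma \ref{l1}, and the homotopy comes for free from contractibility of $\R^2$. The only point deserving a moment of care is the compatibility of the three faces of $\Phi$ in the boundary-fixing version, which is precisely what the coincidence $f|_{S^1}=g|_{S^1}=\gamma$ guarantees before one extends over the ball.
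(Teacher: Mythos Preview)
Your proposal is correct and matches the paper's proof exactly: take $g=F_{\gamma,p}$ and use the straight-line homotopy $H(x,t)=(1-t)f(x)+t\,g(x)$. Your additional paragraph on making the homotopy boundary-fixing is unnecessary, since this straight-line homotopy already fixes $S^1$ pointwise (as $f|_{S^1}=g|_{S^1}=\gamma$ gives $H(x,t)=\gamma(x)$ for every $x\in S^1$).
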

\begin{proof} The homotopy transits the segment joining $f(x)$ to $F_{\gamma,p}(x)$ for each $x\in\D$, in other words, $g=F_{\gamma,p}$.
\end{proof}

The union of a set $K\subset A$ (no matter if it is essential or not) with those components of its complement not containing $N$ or $S$ is called the fill of $K$ and denoted $\fil(K)$.

Note that when $\gamma:S^1\to A$ is inessential and injective (i.e {\em simple}), then $\inn(\gamma)=\fil(\gamma)$. But this is trivially false if $\gamma$ is not simple.

\begin{clly}
\label{c1}
Let $\gamma:S^1\to A$ be homotopically trivial, simple and contained in $f^{-1}(A)$ (so that $f(\gamma)$ is inessential in $A$). 
If $S\notin f(\inn(\gamma))$, then there exists $g$ homotopic to $f$ such that $g=f$ in 
$\out(\gamma)$ and $g(\inn(\gamma))\subset A$.
\end{clly}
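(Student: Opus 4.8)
The plan is to combine Lemma \ref{l1} and Lemma \ref{l2}, applied not to $N$ but to $S$, exploiting the fact that $\gamma$ is simple so that $\inn(\gamma)=\fil(\gamma)$. First, since $\gamma\subset f^{-1}(A)$, hypothesis (H1) ensures $f(\gamma)$ is homotopically trivial in $A$, i.e.\ $S\in\out(f(\gamma))$ when we regard $f(\gamma)$ as a plane curve in $\R^2=S^2\setminus\{N\}$. Because $\gamma$ is simple and inessential, $\inn(\gamma)=\fil(\gamma)$ is a closed topological disc whose boundary is $\gamma$; choose an identification of $\fil(\gamma)$ with $\overline\D$ sending $\partial(\fil(\gamma))=\gamma$ to $S^1$. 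Then $f$ restricted to this disc is a continuous map $\overline\D\to\R^2$ (it lands in $\R^2$ precisely because the hypothesis $S\notin f(\inn(\gamma))$ together with $N\notin f(\gamma)$... wait — one must be careful here: we want the target to avoid $S$, so we should work in the chart $\R^2=S^2\setminus\{S\}$ instead, provided $N\notin f(\fil(\gamma))$).

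The cleaner route: work in the plane $\R^2=S^2\setminus\{S\}$ and apply the lemmas to the point $p=N$. By (H1), $f(\gamma)$ is inessential, and since $S\notin f(\inn(\gamma))$ the map $f|_{\fil(\gamma)}$ takes values in $S^2\setminus\{S\}=\R^2$; so $f|_{\fil(\gamma)}:\overline\D\to\R^2$ is a continuous map whose boundary curve is $\gamma=f|_{S^1}$, and $N\in\out(\gamma)$ since $\gamma$ is inessential. Lemma \ref{l2} then yields $g:\overline\D\to\R^2$ homotopic to $f|_{\overline\D}$, agreeing with $f$ on $S^1=\gamma$, with $N\notin g(\D)$. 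Define $g$ on all of $S^2$ by setting it equal to $f$ outside $\inn(\gamma)$ and to the map just constructed on $\inn(\gamma)=\fil(\gamma)$; the two definitions agree on the common boundary $\gamma$, so $g$ is continuous, and the homotopy extends by the constant homotopy outside $\inn(\gamma)$. By construction $g=f$ on $\out(\gamma)$ and $g(\inn(\gamma))$ contains neither $N$ (by the lemma) nor $S$ (since the values lie in $\R^2=S^2\setminus\{S\}$), hence $g(\inn(\gamma))\subset A$.

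The main point requiring attention is the bookkeeping about which pole is deleted to form the plane: the statement phrases everything in terms of $\out(\gamma)$ and $\inn(\gamma)$ in the chart $S^2\setminus\{S\}$ (so that ``homotopically trivial'' and ``$N\in\out(\gamma)$'' match), while the conclusion $g(\inn(\gamma))\subset A$ needs the image to miss \emph{both} poles — missing $S$ comes for free from the hypothesis $S\notin f(\inn(\gamma))$ (allowing us to regard $f|_{\fil(\gamma)}$ as plane-valued), and missing $N$ is exactly what Lemma \ref{l2} delivers. A secondary subtlety is the identification $\inn(\gamma)=\fil(\gamma)\cong\overline\D$, which is where simplicity of $\gamma$ is used; without it the lemmas would not apply to a genuine disc. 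Everything else — continuity of the glued map and of the glued homotopy — is routine pasting along the closed curve $\gamma$.
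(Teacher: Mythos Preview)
Your approach is essentially the paper's: work in the chart $\R^2=S^2\setminus\{S\}$, use simplicity of $\gamma$ to identify the closure of $\inn(\gamma)$ with $\overline\D$, observe that the hypothesis $S\notin f(\inn(\gamma))$ makes $f|_{\overline\D}$ plane-valued, and apply Lemma~\ref{l2} with $p=N$. One notational slip to fix: when you write ``whose boundary curve is $\gamma=f|_{S^1}$, and $N\in\out(\gamma)$ since $\gamma$ is inessential'', the boundary curve in the sense of Lemma~\ref{l2} is $f(\gamma)$, not the original $\gamma$, and the required fact is $N\in\out(f(\gamma))$---which follows from $f(\gamma)$ being inessential (via (H1)), not from $\gamma$ being inessential.
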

\begin{proof} There is a first case where $N\notin f(\inn(\gamma))$, in this case the conclusion trivially holds for $f$, no need of the homotopy. So the problem is when $N$ has $f$-preimages in $\inn(\gamma)$. The plane is taken now as $\R^2=S^2\setminus \{S\}$.
As $\gamma$ is simple, the closure of $\inn(\gamma)$ is homeomorphic to $\overline \D$. 
The hypothesis implies that $f(\overline \D)$ is contained in $\R^2$.
Then the conclusion is a direct application of Lemma \ref{l2}, taking as $p$ the point $N$.
\end{proof}

Note that there exists a number $\delta>0$ such that $d(x,y)\geq \delta$ whenever $f(x)=N$ and $f(y)=S$. In addition, there exists $\rho$ 
such that $d(x,y)<\rho$ and $f(x)=N$ implies $f(y)\in V_N$. Identical properties hold with $S$ instead of $N$, same $\rho$.\\

Cover $f^{-1}(\{N,S\})$ with a finite number of balls of diameters less than $\rho$ and $\delta/2$. Assume every ball intersects $f^{-1}(\{N,S\})$. Denote by $U$ the union of the balls. Note that by the choice 
of $\delta$, if a component $C$ of $U$ intersects $f^{-1}(N)$ then it cannot intersect $f^{-1}(S)$. Moreover, by the choice of $\rho$, 
it follows that $f(U)$ is contained in $V_N\cup V_S$. Note, however, that if $C$ is a component of $U$, then it may occur that $f(\fil(C))$ is not contained in $V_N\cup V_S$, it may be equal to the whole sphere.

The number of components of $U$ is finite and all its boundary components are simple closed curves (perhaps the radius of the balls has to be modified a little in order to obtain this property). For simplicity say that a component of $U$ has type I if it contains $N$ or $S$, has type II if it is essential in $A$ and type III if it is inessential.
This is the main result of this section:

\begin{prop}
\label{p1}
There exists $g$ homotopic to $f$ such that $g^{-1}(\{N, S\})$ has finitely many components, each of which is either essential in $A$ or it contains $N$ or $S$. 
Moreover, $g(x)\neq f(x)$ implies $g(x)\in V_N \cup V_S$.
\end{prop}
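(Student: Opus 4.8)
The plan is to modify $f$, within its homotopy class and one component of $U$ at a time, so as to erase those components of $f^{-1}(\{N,S\})$ that are inessential and miss $N,S$, and to turn the rest into single essential sets, arranging that every modification is supported on a small set over which $f$ already takes values near $N$ or near $S$. For this last point I would first fix a closed disc $D_N\subset V_N$ with $N$ in its interior, similarly $D_S\subset V_S$, and shrink $\rho$ so that $f$ maps the $\rho$-neighbourhood of $f^{-1}(N)$ (resp.\ $f^{-1}(S)$) into $D_N$ (resp.\ $D_S$); then $f(\overline C)\subset D_N$ whenever $C$ is a component of $U$ meeting $f^{-1}(N)$, and likewise for $S$. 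Since the discs $\fil(C)$, $C$ a component of $U$, are pairwise nested or disjoint, I would treat the components in an order for which $\fil(C')\subsetneq\fil(C)$ forces $C'$ to come first, dealing with all the inessential (type III) components before the type I components $C_N\ni N$, $C_S\ni S$ and the type II components.

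\emph{Erasing a type III component $C$.} Say $C$ meets $f^{-1}(N)$ and let $g'$ be the map built so far. By the ordering, every component of $U$ contained in $\fil(C)$ --- necessarily a type III component nested deeper --- has already been erased, so $g'^{-1}(S)\cap\fil(C)=\emptyset$, while $g'=f$ on $\overline C\subset f^{-1}(D_N)$. The curve $\gamma=\partial\fil(C)$ is simple, lies in $\partial U\subset f^{-1}(A)$, and bounds the disc $\fil(C)\subset A$, hence is trivial in $A$; since also $S\notin g'(\inn(\gamma))$, Corollary \ref{c1} (applied to $g'$) gives $g''$ homotopic to $g'$, equal to $g'$ off $\fil(C)$, with $g''(\fil(C))\subset A$. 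Because $g'(\gamma)=f(\gamma)\subset D_N\setminus\{N\}$ has winding zero about $N$, the null-homotopy provided by Lemma \ref{l2} may be taken inside the annulus $D_N\setminus\{N\}$, so in fact $g''(\fil(C))\subset D_N$. Thus $\fil(C)$ is emptied of preimages of $N$ and $S$, none are created, and $g''$ differs from $g'$ only on $\fil(C)$, where its values lie in $D_N$.

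\emph{Straightening a type I or type II component.} After the first stage the preimage set lies in $C_N\cup C_S$ and the type II components. Let $C$ be a type II component meeting $f^{-1}(N)$; then $\fil(C)$ is a closed annulus whose holes have already been emptied, so the current map $g'$ still equals $f$ on $\overline C\subset f^{-1}(D_N)$ and $g'^{-1}(\{N,S\})\cap\fil(C)=f^{-1}(N)\cap C\subset C$. I would replace $g'$ on $C$ alone by a map into $D_N$ whose only preimage of $N$ is an essential sub-annulus $Y'\subset C$ separating $N$ from $S$: collapse $Y'$ onto $N$, and on each of the two annuli cut off by $Y'$ map into $D_N$ avoiding $N$ there, so as to extend $g'|_{\partial C}$ --- possible since $D_N$ is contractible, the ``missing'' winding being absorbed along $\partial Y'$. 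As $D_N$ is contractible and $f(\overline C)\subset D_N$, the new map is homotopic to $f|_C$ rel $\partial C$, so the global homotopy class is preserved; it replaces $f^{-1}(\{N,S\})\cap\fil(C)$ by the single essential set $Y'$, creates no preimage of $S$ (its image misses $S\notin D_N$), and differs from $g'$ only on $C$, where it lies in $D_N$. The component $C_N$ is treated the same way, except that $N\in C_N$ and $\fil(C_N)$ is a disc: collapse a small disc $\Delta_N\ni N$ inside $C_N$ onto $N$ and map $C_N\setminus\Delta_N$ into $D_N$ avoiding $N$ there, extending $g'|_{\partial C_N}$, so that $g'^{-1}(\{N,S\})\cap\fil(C_N)=\Delta_N$, a connected set containing $N$; similarly for $C_S$. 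Carrying this out for $C_N$, $C_S$ and every type II component, and calling the outcome $g$, one gets $g$ homotopic to $f$ with $g^{-1}(\{N,S\})$ equal to $\Delta_N$, $\Delta_S$ and finitely many essential annuli, and with $g(x)\ne f(x)$ only on the finitely many fills of type III components, the type II components, and $C_N\cup C_S$, where $g(x)\in D_N\cup D_S\subset V_N\cup V_S$.

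I expect the main obstacle to be the second stage. Erasing the inessential clusters is not enough: an essential cluster, or the cluster around $N$ or $S$, can still harbour inessential preimage components, and these cannot simply be deleted, because the algebraic number of preimages of $N$ inside a cluster may be nonzero (over all clusters these numbers add up to $d$). The way out is to perform the surgery inside a disc neighbourhood of $N$ (or $S$) that already contains the image of the cluster, replacing the preimage set there by a single essential set (or a connected set through $N$); what makes this feasible is that, once the inessential clusters have been erased, the preimages of $N$ and $S$ inside the fill of a type I or type II cluster are confined to the cluster itself, and hypothesis (H1) is precisely what licenses both that first erasure (through Corollary \ref{c1}) and --- via the vanishing of the winding of $f$ along boundary curves that are trivial in $A$ --- the confinement. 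Checking that no periodic point escapes into $S^2\setminus(V_N\cup V_S)$ and that each step keeps $g$ homotopic to $f$ are the places that need the most care.
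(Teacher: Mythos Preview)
Your proposal is correct and follows essentially the same three-stage architecture as the paper's proof: first erase the type~III components of $U$ innermost-first via Corollary~\ref{c1}, then, using that the fills of the remaining type~II and type~I components now contain preimages of only one of $N,S$, replace the map on each such component by one whose preimage set is a single essential annulus or a disc through the pole, all modifications taking values in $V_N\cup V_S$. The only differences are cosmetic: you work with closed discs $D_N,D_S$ and modify on $C$ rather than on $\fil(C)$ in the second stage, and you spell out the contractibility argument for preserving the homotopy class, which the paper leaves implicit.
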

\begin{proof} It must be shown that each inessential component of $f^{-1}(\{N,S\}$ can be removed by a homotopic map; this will be done by first removing inessential components of $U$, using Corollary \ref{c1}.\\
{\em Step 1}. Let $C$ be a component of $U$ of type (III) and let $\gamma$ be the simple closed curve that bounds $\fil(C)=\inn(\gamma)$. 
Say that $N\in f(C)$ (so that $S\notin f(C)$). Assume first that $S\notin f(\fil(C))$. By Corollary \ref{c1} there exists a map $g$, homotopic to $f$, such that $g(x)=f(x)$ for every $x\notin \fil(C)$ and $N\notin g(\fil(C))$. Moreover, as $f(\gamma)\subset V_N$, then it can also be imposed to $g$ that $g(\fil(C))$ is contained in $V_N$. (This shows how to proceed when $C$ is a component of $U$ such that $f(\fil(C))$ does not contain both $N$ and $S$).\\
Now assume that there is a component $C'$ of $f^{-1}(S)$ contained in $\fil(C)$, so that $S\in f(\fil(C))$ and the above standing hypothesis ceases to hold. In addition, there can also be a component of $f^{-1}(N)$ contained in $\fil(C')$, and so on. But as the number of components is finite, and the above argument shows how to remove a component not containing preimages of $N$ or $S$ in its $\fil$, we conclude that in a finite number of steps, all components of type (III) will disappear. We have thus obtained a map $g$ homotopic to $f$ that is free of type (III) components. Note also that 
$f(x)\neq g(x)$ implies $g(x)\in V_N\cup V_S$, and $g=f$ in a neighborhood of $\{N,S\}$.\\
{\em Step 2.} By the previous step we may assume that there are no inessential components of $U$. Let $C$ be a component of type (II), assume $N\in f(C)$. Let $C'=\fil(C)$ and note that $C'$ is a compact subset of $A$ such that 
$S\notin f(C')$, because in $\fil(C)$ a component containing a preimage of $S$ has to be inessential, so that it was already removed in the previous step. Note that $C'$ is a compact essential annulus contained in $A$. It is clear that there exists $g$ homotopic to $f$ such that the following properties hold: (1) $g=f$ outside $C'$, (2) $g^{-1}(N)$ is an essential annulus (or circle) contained in the interior of $C'$ and (3) $g(x)\neq f(x)$ implies that $g(x)\in V_N$.\\
{\em Step 3.} Let $C$ be the component of $U$ containing $N$. As there are no preimages of $S$ in $C'=\fil(C)$ of $U$ (same reasons as in step 2) and  $C'$ is a disc, there exists a 
map $g$ homotopic to $f$ such that: (1) $g=f$ outside $C'$, (2) $g^{-1}(N)$ is a disc contained in the interior of $C'$ and (3) $g(x)\neq f(x)$ implies $g(x)\in V_N$.
\end{proof} 

With this proposition in hand we can trivially proceed to further homotopies to obtain that the components of type I of $g$ are $N$ and $S$, that the components
of type II are circles and that there are no components of type III. Therefore, the components of $g^{-1}(\{N, S\})$ are $N$, $S$ and essential circles. Moreover, 
the boundary of a component $A'$ of $g^{-1}(A)$ has two components, each contained in the preimage of $N$ or $S$. If both boundary components have the same image, say $N$, then
there exists a map $g'$ homotopic to $g$ so that $g'(A')=N$. Afterwards, a last homotopy is performed in order to transform this annulus into an essential circle. With these
considerations in mind, we have thus obtained the following:\\
\noindent
{\bf Conclusion.} There exists $g$ homotopic to $f$ that satisfies the following properties:\\
(H1) $N$ and $S$ are fixed points of $g$, with weak attracting neighborhoods $V_N$ and $V_S$.\\
(H2) The component of $f^{-1}(N)$ containing $N$ is $\{N\}$. Idem for $S$.\\
(H3) Any other component of the preimage of $N$ is an essential circle in $A$. Idem for $S$. \\
(H4) If $g(x)\neq f(x)$ then $g(x)\in V_N\cup V_S$. It follows that every periodic cycle of $g$ not intersecting $V_N\cup V_S$ is also a periodic cycle of $f$.\\
(H5) Each component of $g^{-1}(A)$ is an annulus, one of whose boundaries component is preimage of $N$ and the other preimage of $S$.\\

Note that the hypothesis (H) of the statement of the Theorem is now trivially valid, because  if $\gamma\subset A\cap f^{-1}(A)$ is homotopically trivial in $A$, it is also
homotopically trivial in $A\cap f^{-1}(A)$ .

\section{Index calculation}

We recall the classical definition of Lefschetz index:

\begin{defi}
\label{index}
Let $\gamma: S^1 \to \R^2$ be a closed curve and $f$ a continuous map defined on $\gamma$ and without fixed points on $\gamma$. Define $I_f(\gamma)$, the Lefschetz index of $f$ in $\gamma$ as
the degree of the circle map $x \to \frac{f(\gamma (x))-\gamma(x)}{||f(\gamma (x))-\gamma(x)||}$.
\end{defi}

And the classical Lefschetz fixed point Theorem: 

\begin{thm}
Let $f$ be a continuous self-map of the plane. If $\gamma$ is a simple closed curve such that $I_f(\gamma)\neq 0$, then $f$ has a fixed point in the  bounded component of 
$\R^2\setminus\gamma$.

\end{thm}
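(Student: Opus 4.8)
The plan is a proof by contradiction based on the elementary fact that a continuous map $S^1\to S^1$ which admits a continuous extension to $\overline\D$ has degree $0$. Write $D$ for the bounded component of $\R^2\setminus\gamma$ and $\overline D=D\cup\gamma(S^1)$ (recall that, by the Jordan curve theorem, $\gamma(S^1)$ is the topological boundary of each of the two complementary components). Suppose, for contradiction, that $f$ has no fixed point in $D$. Since $I_f(\gamma)$ is assumed to be defined, $f$ has no fixed point on $\gamma$ either, so $f(x)\neq x$ for every $x\in\overline D$, and
\[
\phi(x)=\frac{f(x)-x}{\|f(x)-x\|}
\]
is a well-defined continuous map $\phi:\overline D\to S^1$.

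Next I would invoke the Schoenflies theorem to obtain a homeomorphism $h:\overline\D\to\overline D$ carrying $S^1$ onto $\gamma(S^1)$. Consider $\Phi=\phi\circ h:\overline\D\to S^1$, continuous on the closed disc. Its restriction to $S^1$ is $\phi\circ(h|_{S^1})$; since $h|_{S^1}$ and $\gamma$ are both homeomorphisms of $S^1$ onto the same Jordan curve, the map $\gamma^{-1}\circ(h|_{S^1}):S^1\to S^1$ is a homeomorphism, hence of degree $\pm1$, and therefore, by multiplicativity of the degree under composition,
\[
\deg\big(\Phi|_{S^1}\big)=\deg\big(\phi\circ\gamma\big)\cdot\deg\big(\gamma^{-1}\circ(h|_{S^1})\big)=\pm\,I_f(\gamma)\neq 0 .
\]
On the other hand, the inclusion $S^1\hookrightarrow\overline\D$ is null-homotopic (the straight-line contraction $(x,t)\mapsto(1-t)x$ pulls $S^1$ to the center of $\overline\D$), so $\Phi|_{S^1}$ is null-homotopic as a map into $S^1$ and thus has degree $0$. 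This contradiction shows that $f$ must have a fixed point in $\overline D$, and since it has none on $\gamma$, the fixed point lies in $D$.

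The only substantial ingredient is the Schoenflies theorem, which is exactly what allows us to treat $\overline D$ as a disc and thereby extend $\phi|_{\gamma}$ over a contractible set; everything else is standard degree theory (homotopy invariance, multiplicativity, and the vanishing of the degree of a circle map that extends over the disc). One could replace Schoenflies by the Riemann mapping theorem together with Carath\'eodory's boundary theorem, but the Schoenflies route is the most direct and I expect the write-up to be very short. A harmless technical point is the choice of orientation and parametrization of $\gamma$, which can at worst flip the sign of $I_f(\gamma)$ and so does not affect the conclusion.
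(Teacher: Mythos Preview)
The paper does not actually prove this statement: it merely recalls it as ``the classical Lefschetz fixed point Theorem'' and moves on, so there is nothing in the paper to compare your argument against.

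Your proof is correct and is precisely the standard one. The only remark is that you could streamline slightly: once you know $\phi:\overline D\to S^1$ is continuous and $\overline D$ is contractible (which already follows from Schoenflies), $\phi\circ\gamma:S^1\to S^1$ is null-homotopic directly, without introducing the auxiliary homeomorphism $h$ and tracking the $\pm1$ factor. But this is cosmetic; the argument as written is fine.
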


We will need some techniques for calculating indexes of curves. The proof of the next lemma can be found in  \cite[Lemma 3]{iprx3}.

\begin{lemma}
\label{rectangulo}
Let $R\subset \R ^2$ be the square $[-1,1]^2$.  Let $f$ be defined on $\partial R$ such that:
\begin{itemize}
 \item $f(\{y=1\})\subset \{y>1\}$,
 \item $f(\{y=-1\})\subset \{y<-1\}$,
 \item $f(\{x=1\})\subset \{x>1\}$,
 \item $f(\{x=-1\})\subset \{x<-1\}$.

\end{itemize}

Then, $I_f (\gamma) = 1$, where $\gamma$ is $\partial R$ with the positive orientation.

\end{lemma}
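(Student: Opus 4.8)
The plan is to reduce the computation to an explicit model map by means of a fixed-point-free homotopy. First I would note that the four hypotheses force $f(p)\neq p$ for every $p\in\partial R$ — a point on the top edge is sent strictly above the line $\{y=1\}$, and likewise for the other three edges (a corner lying on two edges is covered by either) — so that $I_f(\gamma)$ is well defined; and that these hypotheses amount to sign conditions on the displacement $v_f(p)=f(p)-p$: its second coordinate is positive on $\{y=1\}$ and negative on $\{y=-1\}$, while its first coordinate is positive on $\{x=1\}$ and negative on $\{x=-1\}$.

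Then I would compare $f$ with the model map $\tilde f(p)=2p$, which visibly satisfies the same four hypotheses (it sends $\{y=1\}$ onto $\{y=2\}$, $\{x=-1\}$ onto $\{x=-2\}$, and so on). For $\tilde f$ the displacement is $v_{\tilde f}(p)=p$, so the normalized displacement along $\gamma$ is the radial projection $x\mapsto\gamma(x)/\|\gamma(x)\|$ of the positively oriented square boundary onto $S^1$, well defined because $0$ is interior to $R$; this projection is an orientation-preserving homeomorphism, hence $I_{\tilde f}(\gamma)=1$.

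The core step is the straight-line homotopy $f_t=(1-t)f+t\tilde f$ on $\partial R$, $t\in[0,1]$. On each edge the relevant coordinate of $v_{f_t}(p)=(1-t)v_f(p)+t\,v_{\tilde f}(p)$ is a convex combination of two reals of the same sign — for instance on $\{y=1\}$ the second coordinate of $v_f(p)$ is positive and that of $v_{\tilde f}(p)=p$ equals $1$ — hence is nonzero; the remaining three edges are identical after the obvious relabeling, and corners are handled by either incident edge. Thus $v_{f_t}(p)\neq0$ for all $p\in\partial R$ and all $t$, so $x\mapsto v_{f_t}(\gamma(x))/\|v_{f_t}(\gamma(x))\|$ is a homotopy of maps $S^1\to S^1$, its degree is independent of $t$, and evaluating at $t=0$ and $t=1$ yields $I_f(\gamma)=I_{\tilde f}(\gamma)=1$.

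I expect the only delicate point to be the verification that this homotopy is fixed-point free, i.e. that $v_{f_t}$ never vanishes on $\partial R$; but this is exactly the elementary sign bookkeeping above, carried out on each of the four edges. As an alternative to the homotopy, one could instead track the argument of $v_f(p)$ directly as $p$ runs once counterclockwise around $\partial R$: the sign conditions confine this argument to a half-plane of directions over each edge and to a quadrant at each corner, which forces its total increment to be $2\pi$ and hence the index to be $1$; I would nonetheless favor the homotopy argument as the shorter one to write down.
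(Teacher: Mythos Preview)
Your argument is correct. The reduction to the model map $\tilde f(p)=2p$ via the straight-line homotopy works exactly as you describe: the sign bookkeeping on each edge guarantees that the displacement $v_{f_t}$ never vanishes, so the degree is preserved and equals that of the radial projection, namely $1$. The alternative direct argument you sketch (confining the displacement direction to a half-plane over each edge and to a quadrant at each corner, forcing a total winding of $2\pi$) is also valid.

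As for comparison with the paper: there is none to make, because the paper does not prove this lemma. It simply cites \cite[Lemma~3]{iprx3} for the proof. Your self-contained argument is therefore a genuine addition rather than a paraphrase, and either version you propose would serve well as a replacement for the bare citation.
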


Of course, the fact that $R$ is a square is not  essential in the  hypothesis. 

\begin{clly}
\label{torcido}
Let $\alpha$ and $\beta$ be disjoint simple proper lines in the plane, each one of which separate the plane. Let $\gamma$ and $\delta$ be another pair of disjoint curves separating
the plane. Assume also that each $\gamma$ and $\delta$ intersect $\alpha$ in one point and $\beta$ in one point. Now let $\Gamma$ be the simple closed curve determined by the four
arcs of the curves delimited by the intersection points, with the positive orientation.
Now let $f$ be a map defined on $\Gamma$ such that $f(\Gamma\cap\alpha)$ is contained in the component of the complement of $\alpha$ that does not contains $\beta$,
$f(\Gamma\cap\beta)$ is contained in the component of the complement of $\beta$ that does not contains
$\alpha$, that $f(\Gamma\cap\delta)$ is contained in the component of the complement of $\delta$ that does not contain $\gamma$ and that
$f(\Gamma\cap\gamma)$ is contained in the component of the complement of $\gamma$ that does not contain $\delta$. Then the index of $f$ in $\Gamma$ is equal to $1$.
\end{clly}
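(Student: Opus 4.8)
The corollary asserts that Lemma~\ref{rectangulo} is a statement about a topological configuration, not about the literal square, so the natural strategy is to produce a homeomorphism of the plane carrying the curvilinear quadrilateral $\Gamma$ onto $\partial R$ and carrying the four separating curves $\alpha,\beta,\gamma,\delta$ to the four lines $\{x=\pm1\},\{y=\pm1\}$, thereby transporting the hypotheses on $f$ to exactly the hypotheses of Lemma~\ref{rectangulo}, and then invoke homeomorphism-invariance of the Lefschetz index. I would first fix notation: let $p_1=\Gamma\cap\alpha\cap\gamma$, $p_2=\Gamma\cap\gamma\cap\beta$, $p_3=\Gamma\cap\beta\cap\delta$, $p_4=\Gamma\cap\delta\cap\alpha$ be the four vertices (one checks the four intersection points are distinct since $\alpha,\beta$ are disjoint and $\gamma,\delta$ are disjoint), so that $\Gamma$ consists of the $\gamma$-arc from $p_1$ to $p_2$, the $\beta$-arc from $p_2$ to $p_3$, the $\delta$-arc from $p_3$ to $p_4$, and the $\alpha$-arc from $p_4$ to $p_1$.

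**Construction of the homeomorphism.** Because $\alpha$ and $\beta$ are disjoint proper lines each separating the plane, they cut the plane into three regions; I would use this to build a homeomorphism $h_1$ of the plane sending $\alpha$ to $\{x=-1\}$ and $\beta$ to $\{x=1\}$ (standard: a proper line separating the plane is, by Schoenflies for proper arcs/lines, ambiently equivalent to a straight line, and two disjoint such lines can be straightened simultaneously). After applying $h_1$ I may assume $\alpha=\{x=-1\}$, $\beta=\{x=1\}$, and that $\gamma,\delta$ are disjoint curves each separating the plane and each meeting $\{x=-1\}$ and $\{x=1\}$ in a single point. Now the portion of $\gamma$ between its two intersection points is an arc crossing the strip $-1\le x\le 1$ from one side to the other, and similarly for $\delta$; since $\gamma\cap\delta=\varnothing$ these two crossing arcs are disjoint, so one of them lies ``above'' the other in the strip, and a further homeomorphism $h_2$ of the plane, supported so as to fix $\{x=\pm1\}$, straightens the $\gamma$-arc to $\{y=1,\ -1\le x\le 1\}$ and the $\delta$-arc to $\{y=-1,\ -1\le x\le 1\}$ (up to relabelling $\gamma\leftrightarrow\delta$, which does not affect the statement). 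Thus $h=h_2\circ h_1$ carries $\Gamma$ to $\partial R$ and the four arcs of $\Gamma$ to the four sides of $R$, with the positive orientation preserved (or reversed uniformly, which only affects $I_f(\Gamma)$ by a global sign we can fix by choosing $h$ orientation-preserving).

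**Transporting the hypotheses and concluding.** Set $\tilde f=h\circ f\circ h^{-1}$ on $\partial R$. The hypothesis that $f(\Gamma\cap\alpha)$ lies in the component of the complement of $\alpha$ not containing $\beta$ becomes, after applying $h$, the statement that $\tilde f(\{x=-1\})\subset\{x<-1\}$, because $h$ sends $\alpha\mapsto\{x=-1\}$, $\beta\mapsto\{x=1\}$, and the component of $\R^2\setminus\alpha$ not containing $\beta$ to $\{x<-1\}$; the other three conditions translate in the same way to $\tilde f(\{x=1\})\subset\{x>1\}$, $\tilde f(\{y=1\})\subset\{y>1\}$, $\tilde f(\{y=-1\})\subset\{y<-1\}$. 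Hence Lemma~\ref{rectangulo} applies to $\tilde f$ and gives $I_{\tilde f}(\partial R)=1$. Finally, since the Lefschetz index of Definition~\ref{index} is invariant under an orientation-preserving homeomorphic change of coordinates (the map $x\mapsto \frac{\tilde f(\tilde\gamma(x))-\tilde\gamma(x)}{\|\tilde f(\tilde\gamma(x))-\tilde\gamma(x)\|}$ is homotopic, through nonvanishing maps, to the corresponding map for $f$ and $\Gamma$ — one uses that $h$ is isotopic to the identity and that the direction field $h(q)-h(q')$ is homotopic to $q-q'$ along the isotopy), we conclude $I_f(\Gamma)=I_{\tilde f}(\partial R)=1$.

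**Main obstacle.** The routine bookkeeping (distinctness of the four vertices, the cyclic order of the arcs, orientation conventions) is straightforward; the genuinely delicate point is the simultaneous straightening in the second paragraph — justifying that two disjoint proper lines, and then two disjoint crossing arcs of the strip, can be carried to the standard coordinate lines by an ambient homeomorphism. This is a Schoenflies-type fact, and I would either cite it or, to keep the paper self-contained, observe that $\alpha$ and $\beta$ bound a closed strip homeomorphic to $\R\times[-1,1]$ (each being a bi-infinite simple proper curve) and that within this product structure the disjoint arcs $\gamma,\delta$ are graphs of the ``height'' coordinate after an isotopy, which one can normalize to the constants $\pm1$. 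The remark immediately preceding the corollary — that the square in Lemma~\ref{rectangulo} is inessential — is exactly the informal content being made precise here, so citing that intuition together with the standard Schoenflies theorem is an acceptable level of detail for this step.
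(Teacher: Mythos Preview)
Your overall strategy---conjugating by a plane homeomorphism that carries the curvilinear quadrilateral to $\partial R$ and then invoking Lemma~\ref{rectangulo}---is exactly what the paper intends: the paper gives no proof of the corollary at all, only the sentence preceding it that ``the fact that $R$ is a square is not essential in the hypothesis.'' Your argument is the natural way to make that sentence precise, and your treatment of conjugation-invariance of the index (via an isotopy of $h$ to the identity) is correct.

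There is, however, a real gap in the execution. You straighten $\alpha$ and $\beta$ to the \emph{full} lines $\{x=\pm1\}$, but for $\gamma$ and $\delta$ you only straighten the arcs inside the strip $-1\le x\le 1$ to the top and bottom sides of $R$. Thus $h(\gamma)$ is not the line $\{y=1\}$: outside the strip it is some uncontrolled proper curve. The hypothesis on $\gamma$ transports to the statement that $\tilde f$ of the top side lies in the component of $\R^2\setminus h(\gamma)$ not containing $h(\delta)$, and that component is in general \emph{not} the half-plane $\{y>1\}$. So your claim ``$\tilde f(\{y=1\})\subset\{y>1\}$'' does not follow, and Lemma~\ref{rectangulo} (whose hypotheses refer to genuine half-planes) cannot be applied as written.

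The repair is straightforward and stays within your framework: after $h_2$, perform one more Schoenflies step in each of the closed half-planes $\{x\le -1\}$ and $\{x\ge 1\}$. In each half-plane the pieces of $\gamma$ and $\delta$ are two disjoint simple proper arcs from the boundary line to infinity, and a homeomorphism of the half-plane fixing its boundary line setwise carries them to the horizontal rays at heights $\pm1$. After this, $h(\gamma)=\{y=1\}$ and $h(\delta)=\{y=-1\}$ as full lines, the four hypotheses on $f$ transport verbatim to those of Lemma~\ref{rectangulo}, and your conjugation argument concludes the proof.
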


\begin{rk}\label{remarkrectangulo}
\begin{enumerate}

\noindent\item If the last two items (of Lemma \ref{rectangulo}) are changed to $f(\{x=1\})\subset \{x<1\}$ and $f(\{x=-1\})\subset \{x>-1\}$, then the conclusion is $I_f(\partial R)=-1$.

\item If the first two items (of Lemma \ref{rectangulo}) are changed to $f(\{y=1\})\subset \{y<1\}$,
 $f(\{y=-1\})\subset \{y>-1\}$, then the conclusion is is $I_f(\partial R)=-1$.

\item If \begin{itemize}
 \item $f(\{y=1\})\subset \{y<1\}$,
 \item $f(\{y=-1\})\subset \{y>-1\}$,
 \item $f(\{x=1\})\subset \{x<1\}$,
 \item $f(\{x=-1\})\subset \{x>-1\}$.
\end{itemize}
Then, $I_f (\gamma) = 1$, where $\gamma$ is $\partial R$ with the positive orientation.

\end{enumerate}
\end{rk}

Again, the fact that $R$ is a square is unimportant and therefore we obtain the corresponding corollary in each case, analogous to Corollary \ref{torcido}.

Let $A=\R^2\setminus \{(0,0) \}$ and $A^{'}\subset A$ be an essential subannulus whose boundaries are two simple closed curves $\gamma_1$ and $\gamma_2$ as in the Figure
\ref{figura11}.

\begin{figure}
\caption{}
\label{figura11}
\begin{center}

\psfrag{2}{$\partial A^{'}_{-}$}
\psfrag{1}{$\partial A^{'}_{+}$}
\psfrag{f}{$f$}
\psfrag{aa}{$A^{'}$}
\psfrag{a}{$A$}
\psfrag{f2}{$f(\partial A^{'}_{-})$}
\psfrag{f1}{$f(\partial A^{'}_{+})$}
\psfrag{f}{$f$}
\psfrag{0}{$(0,0)$}
\psfrag{gamma1}{$\gamma_1$}
\psfrag{gamma2}{$\gamma_2$}
\subfigure[]{\includegraphics[scale=0.21]{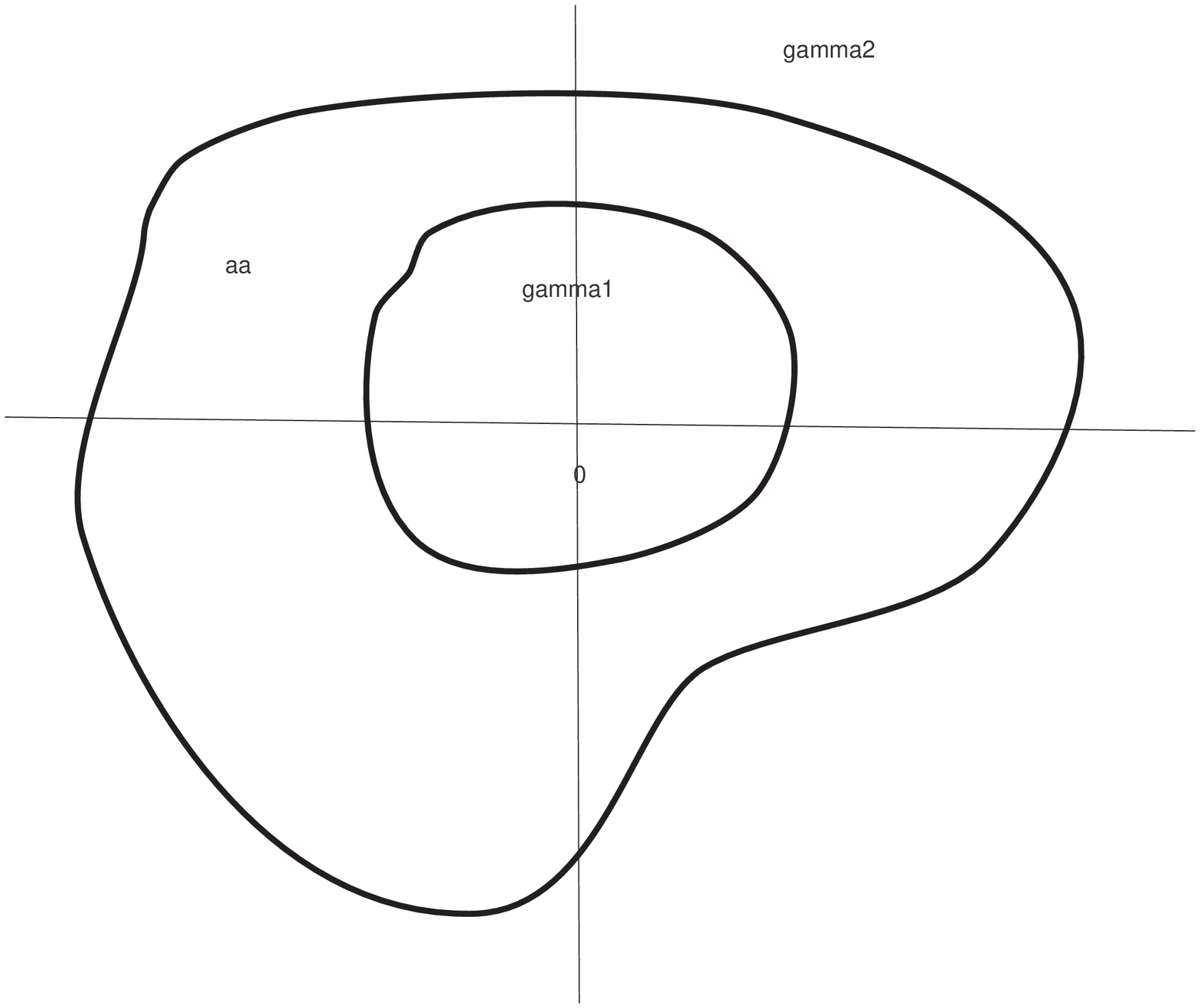}}
\end{center}
\end{figure}
 
Let $(\tilde{A},\Pi_{A} )$  and $(\tilde{A^{'}},\Pi_{A^{'}}) $ be  universal covering projections with $\tilde A '=\Pi_A^{-1}(A')  $ and $\Pi_{A'}: \Pi_A|_{\Pi_A^{-1}(A')}$.
Any $f:A^{'}\to A$ lifts to a map $F:\tilde{A^{'}}\to \tilde{A}$, and as  $\tilde{A^{'}}\subset \tilde{A}$ it makes sense to consider the fixed points of $F$.

The boundary components of $A^{'}$ will be denoted  $\partial A^{'}_{-}$ and $\partial A^{'}_{+}$, with $\partial A^{'}_{-}$ the boundary component which is closer to the origin.

We say that the annulus $A^{'}$ is {\it repelling} for  $f$ si $f(\partial A^{'}_{+})$ is contained in the unbounded connected component of $\R^{2}\setminus\partial A^{'}_{+}$ and 
$f(\partial A^{'}_{-})$ is contained in the bounded connected component of $\R^{2}\setminus\partial A^{'}_{-}$.

Our next result is to prove that a degree $d$ repelling annulus has at least  $|d-1|$ fixed points.  It is classical Nielsen theory that for finding $|d-1|$ different fixed points, it
is enough to find one fixed point for any given lift of the map to its universal covering space (see, for example,  Corollary 1 in \cite{iprx2}).

\noindent
\begin{thm} \label{t3}
Let $f: A^{'} \to A$ be a degree $d$ map. If $A^{'}$ is a repelling annulus, then $f$ has at least $|d-1|$ fixed points.

\end{thm}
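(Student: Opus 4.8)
\textbf{Proof plan for Theorem \ref{t3}.}

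The plan is to reduce the count of fixed points of $f$ to the count of fixed points of a lift $F\colon\tilde A'\to\tilde A$, since by the cited Nielsen-theoretic fact (Corollary 1 in \cite{iprx2}) a single fixed point of an arbitrary lift already produces $|d-1|$ distinct fixed points of $f$ on $A'$. So the whole problem becomes: \emph{exhibit one fixed point of $F$}. To do this I would use coordinates on the universal cover. Identify $\tilde A$ with the infinite strip (or with $\R^2$ via $\log$), so that $\Pi_A$ is, in complex form, $z\mapsto \exp(z)$, the deck group is generated by $z\mapsto z+2\pi i$, and ``moving toward the origin'' corresponds to decreasing real part while ``moving away from the origin'' corresponds to increasing real part. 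Then $\tilde A'$ is the closed sub-strip bounded by the two lifted curves $\tilde{\partial A'_-}$ (on the left) and $\tilde{\partial A'_+}$ (on the right); both are proper lines separating $\tilde A$, invariant under $z\mapsto z+2\pi i$.

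Next I would pick the lift $F$ of $f$ compatible with the degree: since $\deg f=d$, one has $F(z+2\pi i)=F(z)+2\pi i d$. The repelling hypothesis on $A'$ translates into boundary conditions for $F$: $f(\partial A'_+)$ lies in the unbounded component of $\R^2\setminus\partial A'_+$ means $F(\tilde{\partial A'_+})$ lies strictly to the right of $\tilde{\partial A'_+}$ (larger real part in the appropriate sense), and $f(\partial A'_-)$ in the bounded component means $F(\tilde{\partial A'_-})$ lies strictly to the left of $\tilde{\partial A'_-}$. So along the ``horizontal'' boundary of the strip the displacement $F(z)-z$ points outward in the real direction, exactly the configuration of the first two bullets of Lemma \ref{rectangulo} (with the roles of the axes interchanged, i.e.\ the ``$x$-direction'' replaced by the real-part direction). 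The issue is that the strip is not compact in the imaginary direction, so I cannot directly apply Lemma \ref{rectangulo}; I must cut out a large rectangle $R_M$ in $\tilde A'$ whose top and bottom sides are horizontal segments $\{\operatorname{Im} z = c\}$ and $\{\operatorname{Im} z = c+2\pi M\}$ for a large integer $M$, and control the displacement there.

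The key point, and the main obstacle, is getting the right behavior of $F(z)-z$ on the top and bottom sides of $R_M$. Here I would use the equivariance $F(z+2\pi i)=F(z)+2\pi i d$: writing $F(z)=dz+P(z)$ with $P$ periodic of period $2\pi i$, one has $F(z)-z=(d-1)z+P(z)$, and since $P$ is bounded while $(d-1)z$ has imaginary part growing like $(d-1)\operatorname{Im}z$, for $|\operatorname{Im} z|$ large the displacement $F(z)-z$ points essentially straight up (if $d>1$) or straight down (if $d<1$) in the imaginary direction — uniformly in the real part, which ranges over the bounded-width strip. Choosing $M$ large enough that this dominates, the four sides of $R_M$ satisfy the hypotheses of Lemma \ref{rectangulo} (or its variant in Remark \ref{remarkrectangulo}, according to the sign of $d-1$): outward in the real direction on the two vertical sides by the repelling condition, and outward in the imaginary direction on the two horizontal sides by the equivariance estimate. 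Hence $I_F(\partial R_M)=\pm1\neq 0$, so by the Lefschetz fixed point theorem $F$ has a fixed point in $\tilde A'$. This fixed point of the lift yields, via the cited corollary, the desired $|d-1|$ fixed points of $f$, completing the proof. One routine check I would include: the curves $\gamma_1,\gamma_2$ bounding $A'$ need not be round circles, but since (by the remark after Lemma \ref{rectangulo} and Corollary \ref{torcido}) the shape of the rectangle is irrelevant, and after passing to a slightly deformed quadrilateral $\Gamma$ as in Corollary \ref{torcido} if necessary, the same index computation goes through.
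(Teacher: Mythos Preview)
Your proposal is correct and follows essentially the same route as the paper: reduce via Nielsen theory to finding a fixed point of an arbitrary lift $F$, build a large ``rectangle'' in $\tilde A'$ whose two sides along $\tilde{\partial A'_\pm}$ are pushed outward by the repelling hypothesis and whose two transversal sides are pushed the right way by the equivariance $F(z+T)=F(z)+dT$ for large translates, then apply Lemma~\ref{rectangulo}/Corollary~\ref{torcido} (or Remark~\ref{remarkrectangulo} when $d\le 0$) to get nonzero index. The only cosmetic difference is that the paper takes the transversal sides to be deck-translates $V_m=V_0+(m,0)$ of a single arc $V_0$ chosen to meet each $\tilde\gamma_i$ exactly once, which cleanly avoids the multiple-intersection issue you flag at the end; your formulation $F(z)=dz+P(z)$ with $P$ bounded periodic is just a repackaging of the same translate estimate.
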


 \begin{proof}
   As mentioned above, it is enough to show that any lift of $F$ to $\tilde A' \subset \tilde A$ has a fixed point.
   
  If $d>0$, we may assume $d\geq 2$ (if $d=1$ the conclusion of the theorem is trivially satisfied).
  Let  $F$ be a lift of $f$. We take $\tilde{A}=\{ (x,y)\in\R^{2}: \  0<y<1 \}$ and note that as the degree of $f$
  is $d$ we have the equality $F(x+1,y)=F(x,y)+(d,0)$ for all $x\in \tilde A'$. Let $\gamma_1=\partial A^{'}_{-}$, $\gamma_2=\partial A^{'}_{+}$ and 
  $\tilde{\gamma_{i}}=\Pi^{-1}_{A}(\gamma_i)$, $i=1,2$. Note that each $\tilde{\gamma_{i}}$ is an embedded line separating $\tilde{A}$ in two connected components.

Take a point  $q_{0}\in \tilde{\gamma_{1}}$ and let $\tilde{\gamma}_{1}^{0}$ be the lift of $\gamma_1$ starting at $q_{0}$  (see Figure \ref{figura5}).
For each $m\in\N$ let
$$ \Gamma_1^m=\bigcup_{k=-m}^m(\tilde{\gamma}_{1}^{0}+(k,0)    )  .    $$

 Let $V^{'}_0$ be a simple arc joining the ends of $\tilde{A}$ such that  $V^{'}_0\cap  \tilde{\gamma_{1}}=\{q_0\}$ y $V^{'}_0\cap  \tilde{\gamma_{2}}=\{p_0\}$ (see Figure \ref{figura5}).
 
 We let $\tilde{\gamma}_{2}^{0}$ be the lift of  $\gamma_2$ starting at $p_{0}$ and for each $m\in\N$ we let
$$ \Gamma_2^m=\bigcup_{k=-m}^m(\tilde{\gamma}_{2}^{0}+(k,0)    )   .   $$

 For each $m\in\Z$,  $V^{'}_m=V^{'}_0+(m,0)$. Let $V_0\subset V_{0}^{'}$ be the arc joining $q_0$ and $p_0$ and let $V_m=V_0+(m,0)$.

 We let $\beta$ be the simple closed and positively oriented loop defined by:
 \begin{equation}\label{curvacerrada}
   \beta=               \Gamma_2^m    . V_m . \Gamma_1^m .  V_{-m}
 \end{equation}

Note that
as $F(x+m,y)=F(x,y)+(dm,0)$ for all $m\in \Z$, and $d>1$, then there exists $m>0$ such that the following conditions hold:

\begin{enumerate}
\item
The image of $V_m$ under $F$ is contained in the connected component of $\R^2\setminus V^{'}_m$ that does not contain $V^{'}_0$.
\item
The image of $V_{-m}$ under $F$ is contained in the connected component of $\R^2\setminus V^{'}_{-m} $  that does not contain $V^{'}_0$.

\item As $A^{'}$ is repelling, $F(\Gamma_1^m)$ is in the connected component of  $\tilde{A}\setminus \tilde{\gamma}_1$ that does not contain $\tilde{\gamma}_2$ and
$F(\Gamma_2^m)$ is in the connected component of   $\tilde{A}\setminus \tilde{\gamma}_2$ that does not contain$\tilde{\gamma}_1$.

\end{enumerate}
 Then, by Corollary \ref{torcido} we obtain $I_F(\beta)=1$ and therefore $F$ has a fixed point.\\
 
 If $d\leq -1$, considering the same loop $\beta$, we may find $m\in\Z$ such that:

\begin{enumerate}
\item
The image of $V_m$ under $F$ is contained in the connected component of $\R^2\setminus V^{'}_m$ containing $V^{'}_0$. 
\item
The image of $V_{-m}$ under $F$ is contained in the connected component of $\R^2\setminus V^{'}_{-m} $  containing $V^{'}_0$.

\item $F(\Gamma_1^m)$ is in the connected component of  $\tilde{A}\setminus \tilde{\gamma}_1$ that does not contain $\tilde{\gamma}_2$ and
$F(\Gamma_2^m)$ is in the connected component of   $\tilde{A}\setminus \tilde{\gamma}_2$ that does not contain$\tilde{\gamma}_1$.
\end{enumerate}

 Now by Remark \ref{remarkrectangulo} item (1) ( and its corresponding corollary) we obtain $I_F(\beta)=-1$ and therefore $F$ has a fixed point.\\

If $d=0$, then $F(x+n,y)=F(x,y)$, for all  $n\in \Z$. It follows that by considering the same loop $\beta$, one may find $m\in \Z$ verifying the same properties than for the case $d<0$.  
Again, this gives a fixed point for $F$.

\end{proof}

\begin{figure}[h]
\psfrag{q1}{$q_{0}$}

\psfrag{p1}{$p_{0}$}

\psfrag{gamma02}{$\tilde{\gamma}_{1}^{0}$}

\psfrag{q2}{$p_{0}$}
\psfrag{beta}{$\beta$}
\psfrag{gamma01}{$\tilde{\gamma}_{2}^{0}$}
\psfrag{v-m}{$V_{-m}$}
\psfrag{v0}{$V_{0}$}
\psfrag{vm}{$V_{m}$}
\psfrag{vv0}{$V^{'}_{0}$}
\psfrag{aa}{$0$}
\psfrag{bb}{$1$}
\psfrag{i0}{$I_0$}\psfrag{i1}{$I_1$}
\psfrag{i2}{$I_{2}$}
\psfrag{ii}{$I$}
\psfrag{vvm}{$V^{'}_{m}$}
\begin{center}
\caption{\label{figura5}}
\subfigure{\includegraphics[scale=0.25]{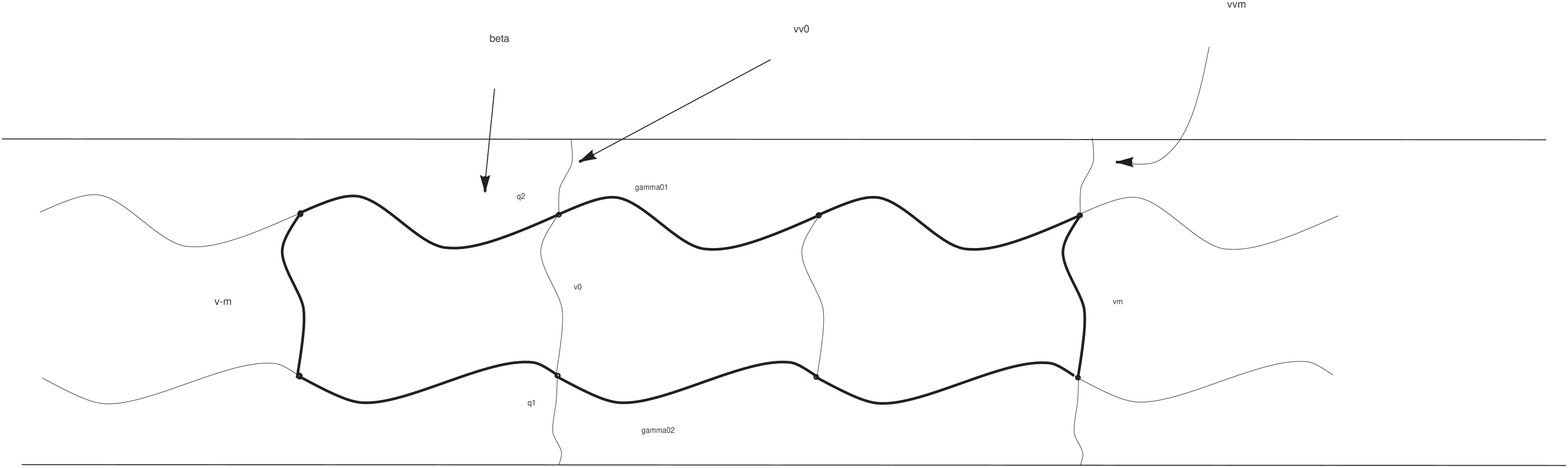}}
\end{center}
\end{figure}

\begin{figure}
\caption{}
\label{figura1}
\begin{center}
\psfrag{0}{$(0,0)$}
\psfrag{aa}{$A^{'}$}
\psfrag{2}{$\partial A^{'}_{-}$}
\psfrag{1}{$\partial A^{'}_{+}$}
\psfrag{n}{$N$}
\psfrag{s}{$S$}
\psfrag{nn}{$N^{'}$}
\psfrag{ss}{$S^{'}$}
\psfrag{a}{$A$}
\psfrag{f2}{$f(\partial A^{'}_{-})$}
\psfrag{f1}{$f(\partial A^{'}_{+})$}
\psfrag{f}{$f$}
\psfrag{1}{$\delta_1=1$}
\psfrag{2}{$\delta_2=0$}
\psfrag{3}{$\delta_3=1$}
\psfrag{fn}{$f(N^{'})=N \mbox{ and } f(S^{'})=S.$}
\subfigure[]{\includegraphics[scale=0.21]{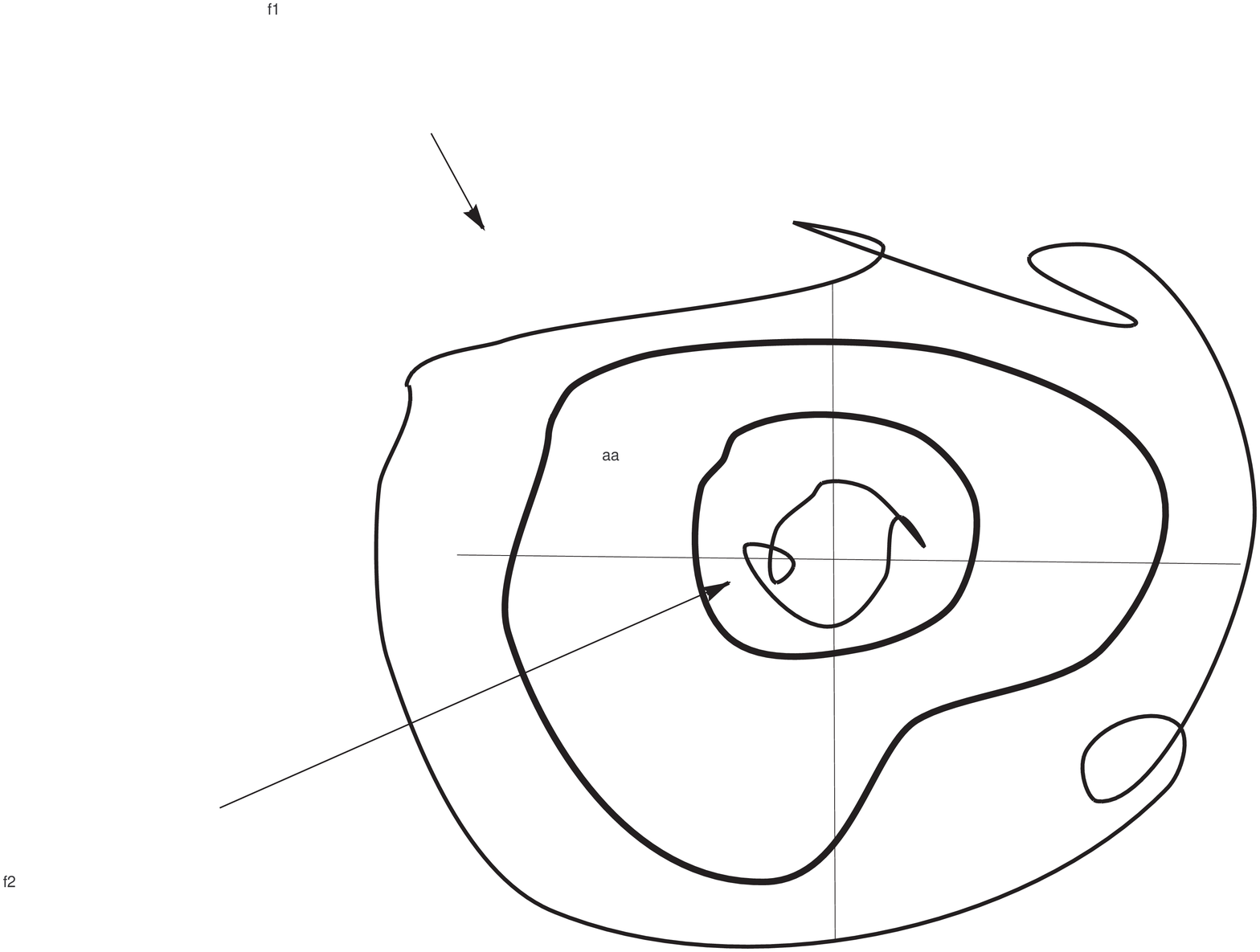}}
\subfigure[]{\includegraphics[scale=0.23]{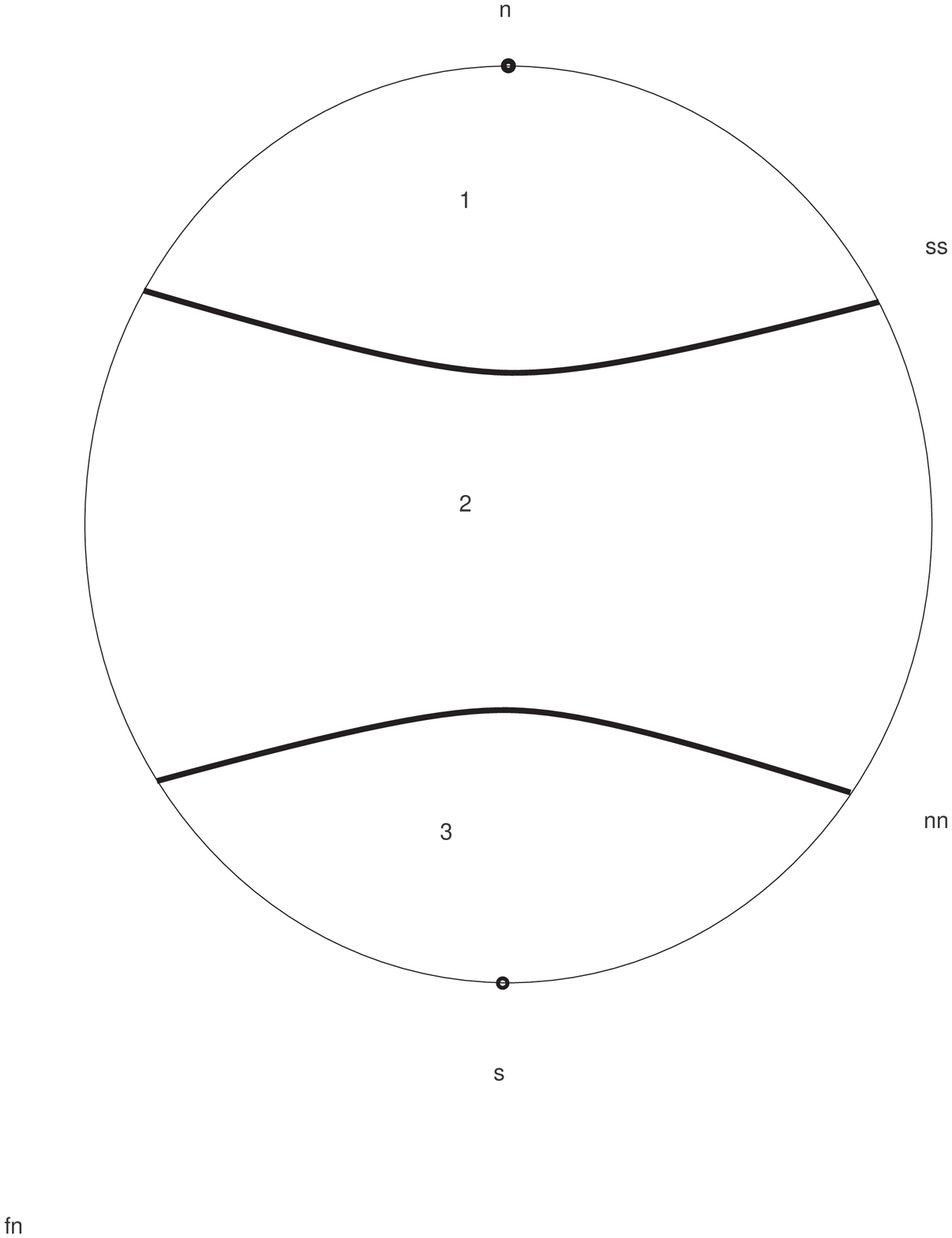}}
\end{center}
\end{figure}

As an application, we obtain a new proof of Pugh and Shub's Theorem in \cite{ps}:

\begin{thm} Let $f:S^2\to S^2$ be a degree two endomorphism that preserves the latitude foliation. If $f$ is $C^1$ then $f^n$ has at least $2^n$ fixed points.

\end{thm}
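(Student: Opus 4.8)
The plan is to deduce the Pugh--Shub theorem as a special case of the machinery developed above. First I would observe that the hypothesis of Theorem A is automatically satisfied: if $f$ preserves the latitude foliation then the poles $N$ and $S$ are (weak) attracting fixed points (after passing to $f^2$ if necessary, but for degree two one can arrange $N,S$ to be genuinely fixed), and any loop $\gamma\subset f^{-1}(A)$ that is trivial in $A$ is also trivial in $f^{-1}(A)$ because leaves map to leaves; hence $f(\gamma)$ is trivial in $A$ as well. So (H1) and (H2) hold, and moreover the leaf structure already gives us the conclusion of Proposition \ref{p1} almost for free: we may replace $f$ by a homotopic $g$ satisfying (H1)--(H5) above, and by (H4) it suffices to count periodic points of $g$ outside $V_N\cup V_S$, since these are genuine periodic points of $f$.

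Next I would pass to iterates. Fix $n\geq 1$ and consider $g^n$, which has degree $d^n=2^n$ and still satisfies (H1)--(H5) with the same poles (the preimage of $N$ under $g^n$ is a finite union of $N$ together with essential circles in $A$, and likewise for $S$). By (H5), each component $A'$ of $(g^n)^{-1}(A)$ is an essential subannulus of $A$ whose two boundary circles map to $N$ and to $S$ respectively (the case where both map to the same pole having been eliminated in the Conclusion). I would now argue that, after the straightening, each such component $A'$ on which $g^n$ has the ``right'' boundary behaviour is a repelling annulus in the sense defined before Theorem \ref{t3}: the boundary component of $A'$ mapped toward $N$ is pushed by $g^n$ toward the $N$-side (i.e. into the bounded complementary component of $\partial A'_-$, taking the plane to be $S^2\setminus\{S\}$ with $N$ near the origin), and symmetrically for the other boundary. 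This is exactly where the foliation hypothesis does real work: latitudes near $N$ are mapped strictly toward $N$ because $N$ is attracting, so the outermost component $A'$ of $(g^n)^{-1}(A)$ — the one adjacent to $V_N$ and $V_S$ — is genuinely repelling.

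Then I would apply Theorem \ref{t3} to $g^n$ restricted to this repelling component $A'$, viewed as a degree $2^n$ map $A'\to A$, to obtain at least $|2^n-1|=2^n-1$ fixed points of $g^n$ inside $A'\subset A$, i.e. outside $V_N\cup V_S$; these are fixed points of $f^n$ by (H4). To reach the sharp count $2^n$ I would add back the pole $N$ itself (or $S$), which is a fixed point of every iterate, giving $2^n-1+1 = 2^n$. One must be a little careful that the $2^n-1$ Nielsen classes produced by Theorem \ref{t3} are genuinely distinct from the fixed point at the pole and from each other; distinctness among themselves is guaranteed by the Nielsen-theoretic statement quoted before Theorem \ref{t3} (different lifts give different fixed points), and distinctness from the pole follows because those fixed points lie in the compact essential subannulus $A'$, bounded away from $N$ and $S$.

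The main obstacle I expect is the verification that the relevant component $A'$ of $(g^n)^{-1}(A)$ is actually repelling in the precise boundary sense required by Theorem \ref{t3}, rather than merely ``essential''. The straightening in Section 2 guarantees the combinatorial/homotopical structure (H5) but does not by itself orient the boundary dynamics; one needs the attracting nature of the poles (which for the foliation-preserving $C^1$ map is built in, but must be propagated through the homotopy $g$, using (H4) which keeps the modification confined to $V_N\cup V_S$) to conclude that the outermost preimage annulus is repelled outward. Pinning this down — choosing $A'$ to be the component whose closure meets $\partial V_N$ and $\partial V_S$, and checking the two inclusions in the definition of ``repelling'' — is the crux; once it is in place, Theorem \ref{t3} and the bookkeeping with the pole finish the proof.
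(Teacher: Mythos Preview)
Your core idea---find a repelling annulus, apply Theorem~\ref{t3} to $f^n$ to get $2^n-1$ fixed points, then add a pole---is exactly the paper's argument. But you have buried it under the full Section~2 straightening machinery, which in the latitude-preserving case is entirely superfluous. If $f$ preserves the latitude foliation then poles map to poles, so $f^{-1}(\{N,S\})=\{N,S\}$ already; there are no extra preimages to remove, Proposition~\ref{p1} is vacuous, $g=f$, and $(f^n)^{-1}(A)=A$ is a single component. The paper's one-line proof is simply: choose small latitude circles near $N$ and $S$ (using that the $C^1$ hypothesis forces the poles to be attracting, or to interchange---either way the annulus they bound is repelling), and apply Theorem~\ref{t3} directly to $f^n:A'\to A$ of degree $2^n$.

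Your detour also creates a small gap you do not address: you assert that the chosen component $A'$ carries the full degree $2^n$. In the general Theorem~A setting this is false---the degree is distributed among the components via Lemmas~\ref{suma} and~\ref{delta}, and the paper's proof of Theorem~A sums $|\delta_i-1|$ over \emph{all} components rather than working with a single one. Your claim happens to be true here only because there is just one component, but that is precisely the observation that makes the whole straightening unnecessary in the first place. Finally, your parenthetical ``for degree two one can arrange $N,S$ to be genuinely fixed'' is not quite right; degree two does not prevent the poles from being swapped, but as the paper notes, the swapped case still yields a repelling annulus.
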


And of Misurewicz's Theorem in \cite{mis}:

\begin{thm}  Let $f:S^2\to S^2$ be a degree two longitudinal $C^1$ endomorphism. Then $f^n$ has at least $2^n$ fixed points.
 
\end{thm}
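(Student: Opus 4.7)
The plan is to apply Theorem~\ref{t3} to the iterates $f^n$, using the framework established in Section~2.

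First I would reduce to the case $f(N) = N$ and $f(S) = S$ by passing to $f^2$ if necessary; since $f$ is longitudinal it permutes the singularities $\{N, S\}$ of the foliation, and the bound $2^n$ survives this replacement. Hypothesis (H1) is immediate from the longitudinal structure: the longitude projection $\Theta\colon A \to S^1$ satisfies $\Theta \circ f = \tilde{\Theta} \circ \Theta$ for some degree-$2$ self-map $\tilde{\Theta}$ of $S^1$, and a loop in $A$ is trivial iff its $\Theta$-image is null-homotopic in $S^1$, a property preserved by $\tilde{\Theta}$. For (H2), the $C^1$ hypothesis at each fixed pole together with the constraint imposed by preservation of the longitudinal foliation and the degree-$2$ local branching yields positively invariant neighborhoods $V_N$ of $N$ and $V_S$ of $S$, possibly after passing to a further iterate.

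Next I apply the conclusion of Section~2 to replace $f$ by a homotopic map $g$ satisfying the listed properties (H1)--(H5). Fixing $n \geq 1$, I consider the compact essential annulus $A' = A \setminus (V_N \cup V_S)$, shrinking $V_N, V_S$ if necessary so that $A'$ is disjoint from the finitely many essential circles forming $g^{-1}(\{N, S\}) \setminus \{N, S\}$. Because $V_N$ and $V_S$ are positively invariant under $g$ (by construction and by (H4)), we have $g^n(\partial V_N) \subset V_N$ and $g^n(\partial V_S) \subset V_S$. Viewing $A$ as $\R^2 \setminus \{0\}$ with $N$ at infinity and $S$ at the origin, this says precisely that the outer boundary of $A'$ is pushed into the unbounded component of its complement and the inner boundary into the bounded component, so $A'$ is a repelling annulus for $g^n$. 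Its degree as a map $A' \to A$ equals $2^n$, being $n$ iterations of the degree-$2$ action of $g$ on $H_1(A) \cong \Z$ induced by the longitude projection.

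Applying Theorem~\ref{t3}, $g^n$ has at least $2^n - 1$ fixed points in $A'$. Because $V_N \cup V_S$ is positively invariant, the $g$-orbit of any such fixed point stays disjoint from $V_N \cup V_S$, so by (H4) of the conclusion of Section~2 it is also a periodic cycle of $f$, yielding a fixed point of $f^n$. Together with the two fixed poles $N$ and $S$, this provides at least $(2^n - 1) + 2 = 2^n + 1 \geq 2^n$ fixed points of $f^n$. The main obstacle is verifying (H2) from the $C^1$ longitudinal degree-$2$ hypothesis alone: one has to perform a local analysis at each pole, combining the $C^1$ smoothness, the foliation constraint and the branching structure, in order to produce the invariant neighborhoods. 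This is the step where $C^1$ regularity (and not mere continuity) is genuinely used.
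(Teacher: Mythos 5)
The paper's ``proof'' of this statement is a one-sentence remark immediately following the statement: both the latitudinal and longitudinal $C^1$ degree-two cases directly yield a repelling annulus (using that $C^1$ fixed poles must attract, or that swapped poles form an attracting $2$-cycle), after which Theorem~\ref{t3} is applied. Your proposal is in the same spirit but is considerably more elaborate, routing through the straightening machinery of Section~2, and it contains two genuine gaps.

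First, the reduction ``pass to $f^2$ if $N$ and $S$ swap; the bound $2^n$ survives this replacement'' is unjustified. Proving $\#\fix((f^2)^m)\geq 4^m$ gives control of even iterates only; it does not yield $\#\fix(f^n)\geq 2^n$ for odd $n$. The paper's remark handles the swapping case directly by noting the same annulus $S^2\setminus(V_N\cup V_S)$ still works for $f$ itself (this time via Remark~\ref{remarkrectangulo}, item (3), where both boundary curves are pushed ``inward,'' still giving index $\pm 1$). You would need a similar direct argument for $f$, not a reduction to $f^2$.

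Second, and more seriously: the sentence ``shrinking $V_N,V_S$ if necessary so that $A'=A\setminus(V_N\cup V_S)$ is disjoint from the finitely many essential circles of $g^{-1}(\{N,S\})$'' cannot be right. Shrinking $V_N$ and $V_S$ makes $A'$ \emph{larger}, not smaller. Worse, since $V_N,V_S$ are disks, any simple closed curve contained in one of them is inessential in $A$; an essential circle therefore can never be absorbed into $V_N$ or $V_S$, no matter how they are resized. If $g^{-1}(\{N,S\})$ contains an essential circle, $g$ does not map $A'$ into $A$ and Theorem~\ref{t3} does not apply to $A'$ as you describe; one would instead have to run the full multi-annulus bookkeeping of the proof of Theorem~A (with the $\Sigma_i$, $d_i$, $\delta_i$). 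What is actually needed here --- and what the paper takes for granted in its remark --- is the observation that in the longitudinal (and latitudinal) $C^1$ setting one can arrange $f^{-1}(\{N,S\})=\{N,S\}$, so no essential circles ever arise and the single annulus $A'$ suffices. You should either justify that observation from the foliation-preserving structure, or invoke Theorem~A itself (its hypothesis is trivially satisfied here, as noted in the introduction), rather than the incorrect ``shrink the neighborhoods'' step.
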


Note that in both cases one obtains a repelling annulus. (If $f$ is $C^1$, then $N$ and $S$ if fixed, must be attracting.  If they are not fixed they interchange as a result
of their hypothesis of preserving either the latitudinal or longitudinal foliation.  This also gives a repelling annulus).

More generally, we have the following result:

\begin{clly} Let $f:S^2\to S^2$ be a degree $d$ $C^1$ endomorphism.  If $f^{-1}(N)\subset \{N,S\}$ and $f^{-1}(S)\subset \{N,S\}$ then $f$ has at least $|d-1|$ fixed points.
 
\end{clly}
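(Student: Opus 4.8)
The plan is to deduce this corollary directly from Theorem \ref{t3}, by checking that the hypotheses force the annulus $A = S^2 \setminus \{N,S\}$ (or a sub-annulus of it) to be repelling for $f$. The first step is to observe that, since $f$ is $C^1$ and $f^{-1}(N) \subset \{N,S\}$, the point $N$ is fixed: if $f(N) = S$, then $S$ would have $N$ as a preimage, but also $S$ itself if $f(S)=S$, so one examines the two cases $f(N)=N, f(S)=S$ versus $f(N)=S, f(S)=N$. In the first case $N$ and $S$ are both fixed; in the second case they form a $2$-cycle and one works with $f^2$, which again satisfies the hypotheses and has $N,S$ as fixed points. So without loss of generality assume $f(N) = N$ and $f(S)=S$, and moreover $f^{-1}(N) = \{N,S\}$ or $\{N\}$, similarly for $S$ — but since $f$ has degree $d$ with $|d|>1$, a point cannot have only itself as preimage unless it is a totally ramified fixed point, which for a $C^1$ map would make it attracting or repelling; in any event $N$ and $S$ each have full preimage inside $\{N,S\}$.

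The second step is the crucial one: since $f$ is $C^1$ and $N$ is a fixed point with $f^{-1}(N) \subset \{N,S\}$, I claim $N$ is an attracting fixed point, and likewise $S$. Indeed, $\deg f = d$ with $|d|>1$ forces, by a local degree count, that every regular value has $|d|$ preimages; the hypothesis that $N$'s only preimages are among $\{N,S\}$ means $N$ is a critical value with the full ramification concentrated at $N$ and possibly $S$. For a $C^1$ map this means the local degree of $f$ at $N$ is some $k$ with $|k| > 1$ (if the local degree at $N$ were $\pm 1$, a neighborhood of $N$ would be mapped injectively and $N$ would have nearby regular preimages), hence $Df_N$ is either singular or $f$ behaves like $z \mapsto z^k$ near $N$; in all cases $N$ is a (weak) attractor or we can pass to a neighborhood on which $\overline{V_N}$ maps into $V_N$. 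Then choosing $V_N, V_S$ disjoint forward-invariant neighborhoods, the annulus $A' = S^2 \setminus (V_N \cup V_S)$, suitably thickened so its boundary consists of two simple closed curves $\partial A'_+$ (near $N$) and $\partial A'_-$ (near $S$), is repelling for $f$ in the sense of the definition preceding Theorem \ref{t3}: $f(\partial A'_+)$ lands inside $V_N$, the component of the complement cut off near $N$, and similarly for $\partial A'_-$.

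The third step is to identify the degree of $f|_{A'} : A' \to A$. Since $A'$ is an essential sub-annulus of $A$ and $f$ maps it into $A$ (no preimages of $N$ or $S$ land in $A'$, as those preimages are exactly $N, S$ themselves which are in $V_N \cup V_S$), the induced map on $\pi_1 \cong \mathbb{Z}$ is multiplication by some integer $d'$; because $f: S^2 \to S^2$ has degree $d$ and the only preimages of a regular value near the equator are the $|d|$ sheets wrapping around the annulus, we get $d' = d$. Applying Theorem \ref{t3} to $f|_{A'}$ gives at least $|d-1|$ fixed points of $f$ in $A'$, all distinct from $N$ and $S$; adding $N$ and $S$ themselves is not needed since $|d-1| \geq 1$ already, but in fact $N, S$ give two more, so $f$ has at least $|d-1|$ fixed points (and more). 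I expect the main obstacle to be the rigorous justification in Step 2 that the $C^1$ hypothesis plus $f^{-1}(N) \subset \{N,S\}$ forces $N$ to be an attractor — one must rule out $N$ being a repeller or a saddle, which uses that a $C^1$ local diffeomorphism near $N$ would create regular preimages of $N$ near $N$ itself via the inverse function theorem, contradicting the preimage hypothesis unless the local degree exceeds one in absolute value, and then a local normal-form or index argument pins down the attracting behavior (possibly only for $f^2$, which is why the statement tolerates passing to a power).
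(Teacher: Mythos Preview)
Your overall strategy matches the paper's intended argument: use the $C^1$ hypothesis to force $N$ and $S$ to be (weak) attractors, obtain a repelling sub-annulus $A'$, and apply Theorem~\ref{t3}. The paper does not spell out a proof of this corollary; its only hint is the sentence ``If $f$ is $C^1$, then $N$ and $S$ if fixed, must be attracting,'' which is exactly your Step~2. Your acknowledgement that this step is the delicate one is well placed --- the paper, like you, takes it essentially for granted (it is implicit in \cite{ps} and \cite{mis}).

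There is, however, a genuine gap in your treatment of the case $f(N)=S$, $f(S)=N$. You write ``one works with $f^2$ \ldots\ so without loss of generality assume $f(N)=N$ and $f(S)=S$.'' But applying the argument to $f^2$ produces at least $|d^2-1|$ fixed points of $f^2$, not of $f$; since $\fix(f^2)$ decomposes into fixed points of $f$ and genuine $2$-cycles, no lower bound on $\#\fix(f)$ follows. The reduction is not ``without loss of generality.''

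To handle the swapping case one must argue directly for $f$. The set $\{N,S\}$ is then an attracting $2$-cycle, so for suitable $V_N,V_S$ one has $f(\partial V_N)\subset V_S$ and $f(\partial V_S)\subset V_N$; the resulting annulus $A'$ is \emph{not} repelling --- each boundary is sent to the far side of the other boundary. One repeats the proof of Theorem~\ref{t3} using items~(2) or~(3) of Remark~\ref{remarkrectangulo} in place of Corollary~\ref{torcido}. A second subtlety is that the annulus degree is now $-d$ rather than $d$ (a small loop around $S$ is sent to a loop around $N$, which represents the opposite generator of $H_1(A)$), so the Nielsen count yields $|-d-1|=|d+1|$ fixed points of $f$ in $A$. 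This gives the claimed bound $|d-1|$ only when $d\geq 0$; for $d<0$ one has $|d+1|<|d-1|$ and indeed $z\mapsto 1/\bar z^{\,2}$ (sphere degree $-2$, poles swapped) has a single fixed point $z=1$. So in the swapping case with $d<0$ the stated bound cannot be reached by this route, and one should read the corollary as implicitly assuming $N$ and $S$ are fixed (or $d>0$).
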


\section{Degree}

Let $\sigma: \Sigma_1 \to \Sigma_2$ be a continuous map between topological 2-spheres $\Sigma_1$ and $\Sigma _2$ and choose generators $[\alpha_i]$ of $H_ 2(\Sigma_i)\approx \Z$ so that $\deg \sigma$ is 
defined.  Suppose that for some $y\in \Sigma_2$, the preimage $\sigma^{-1}(y)$
consists of finitely many points $x_1, \ldots, x_n$.  Let $U_1, \ldots, U_m$ be disjoint neighbourhoods of these connected components, mapped by 
$\sigma$
into a neighbourhood $V$ of $y$.  Then $\sigma(U_i-x_i)\subset V-y$ for each $i$, the groups $H_2 (U_i, U_i-x_i)$ and $H_2 (V, V-y)$ can be identified with 
$H_2(\Sigma_1)$ and $H_2(\Sigma_2)$ respectively and there exists an integer called the {\it local degree} of $\sigma$ at $x_i$, written 
$\deg \sigma|_{x_i}$ such that the homomorphism $\sigma_*:H_2 (U_i, U_i-x_i)\to H_2 (V, V-y)$ maps $[\alpha_1]$ to $\deg \sigma|_{x_i} [\alpha_2]$ .  Moreover, $\deg \sigma = \sum_i \deg \sigma|_{x_i}$ (See \cite{hatcher} Proposition 2.30). Note also that the groups $H_2 (U_i, U_i-x_i)$ and $H_2 (V, V-y)$
can be identified with $H_1(U-x_i)$ and $H_1(V-y)$ respectively as they fit in the long exact sequence of homology groups:
$$\ldots \longrightarrow H_2 (U) \longrightarrow H_2 (U, U-x_i) \longrightarrow H_1 (U-x_i) \longrightarrow H_1 (U) \longrightarrow \ldots ,$$\noindent where the first and last group
vanish and so the middle map is an isomorphism. So, the corresponding degree of the annulus map
$\sigma|_{U-x_i}: U-x_i \to V-y$ is also $\deg \sigma|_{x_i}$.

Recall that we may assume that the set of connected components of $f^{-1} (N\cup S)$ is $\{N, S, X_1, \ldots, X_p\}$, where each $X_i$ is an essential circle in $A$.

The topological space obtained by collapsing the connected components of $f^{-1} (N\cup S)$ to points is homeomorphic to the sum of a finite number of spheres that we denote 
$\vee_i \Sigma_i$ (although formally it is not a wedge sum). More precisely, $\vee_i \Sigma_i = S^2/\sim$ where $x\sim y$ if and only if $x$ and $y$ belong to the same connected
component of $f^{-1} (N\cup S)$ (coloquially, a cactus, see Figure \ref{cac}).

\begin{figure}[ht]

\begin{center}
{\includegraphics[scale=0.23]{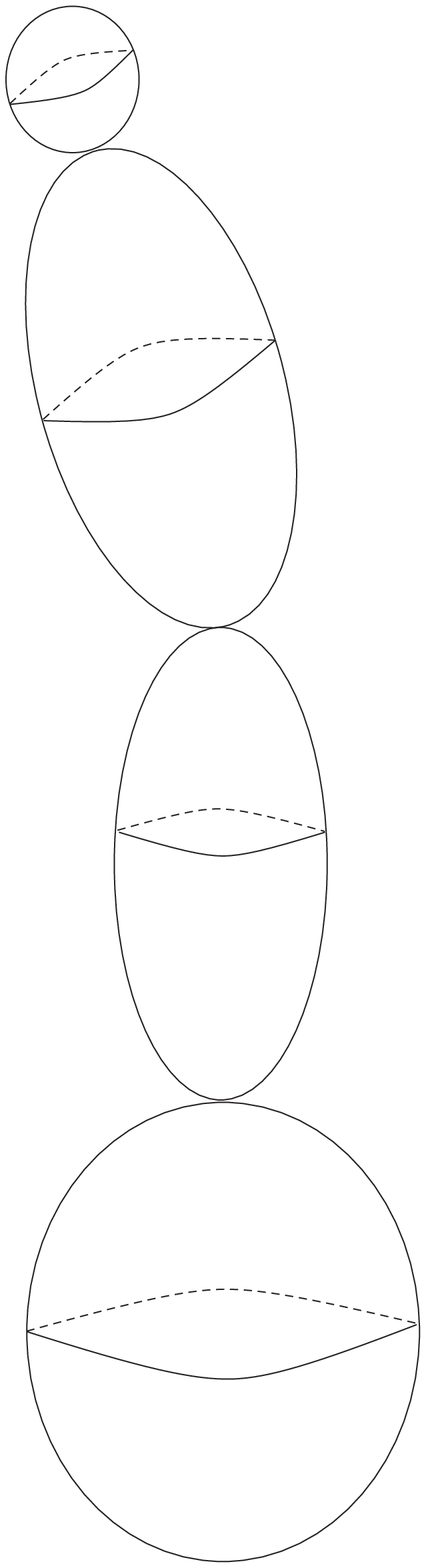}}

\caption{A cactus}\label{cac}
\end{center}
\end{figure}

If $q: S^2\to \vee _i \Sigma _i$ is the quotient projection, then the induced map $q_*: H_2 (S^2)\to H_2 (\vee _i \Sigma _i)$ is surjective, and we may
choose generators of each $H_2 (\Sigma _i)$ such that $q_* (1) = (1, 1, \ldots, 1)$.  Note that $f$ factors to $\hat f: \vee _i \Sigma _i \to S^2$ and induces a morphism 
$\hat f_*:\oplus _i H_2 (\Sigma _i)\to H_2 (S^2)$. We define the degree on each sphere $\Sigma _i$ that we denote $\deg f |_{\Sigma_i}$ as $\hat f_* (e_i)$, where $e_i$ is the 
$i$-th cannonic vector.  In other words, $\deg f |_{\Sigma_i}$ is the degree of the map $\sigma_i=\hat f|_{\Sigma _i} : \Sigma _i \to S^ 2$.

\begin{figure}[ht]
\psfrag{f}{$f$}
\psfrag{b}{$q$}
\psfrag{c}{$\hat f$}
\psfrag{n}{$N$}
\psfrag{s}{$S$}
\begin{center}
{\includegraphics[scale=0.23]{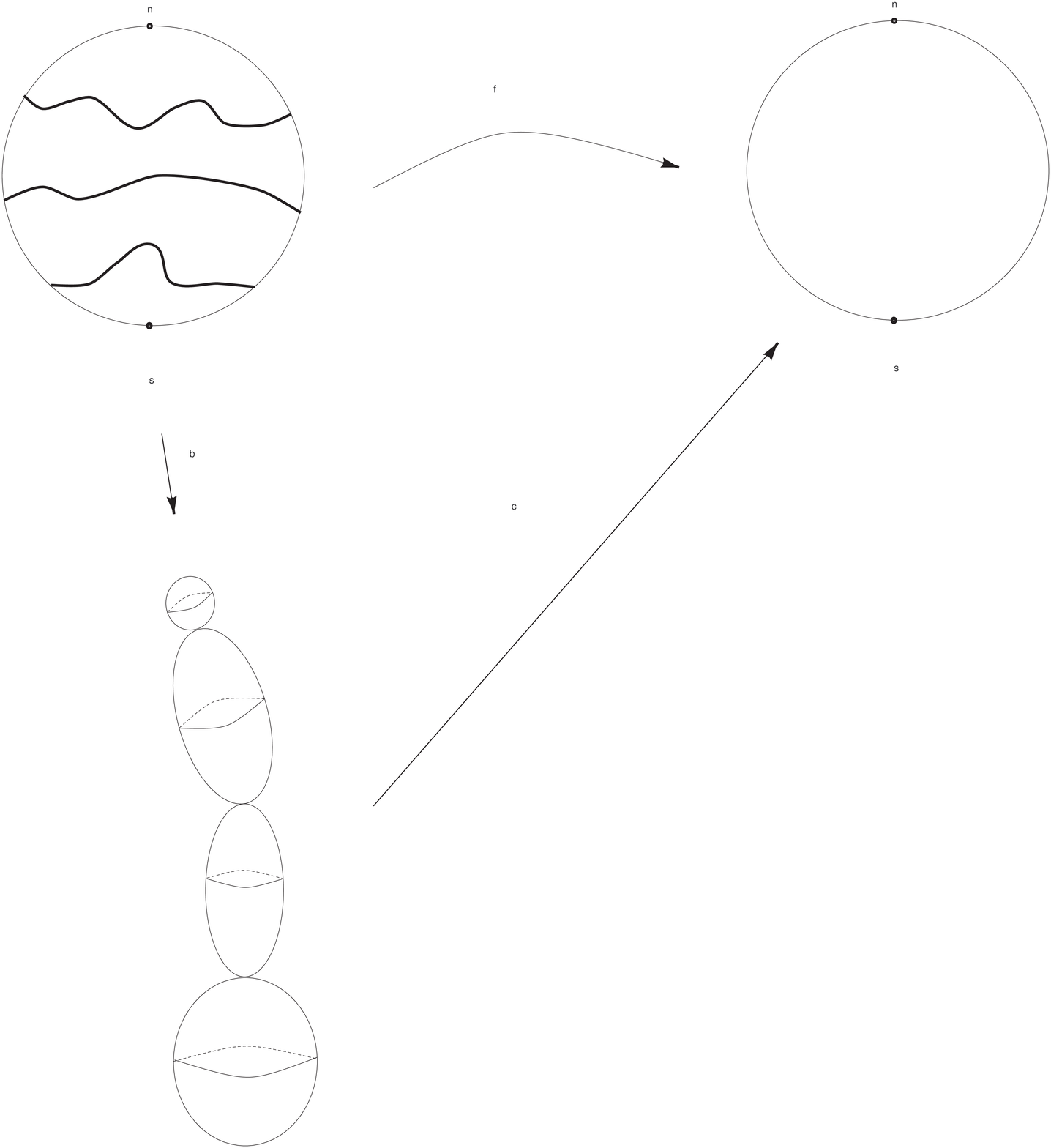}}

\caption{}
\end{center}
\end{figure}

\begin{lemma}\label{suma} $\deg f = \sum _i \deg f |_{\Sigma _i}$
 
\end{lemma}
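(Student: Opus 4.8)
The plan is to compute the degree of $f$ by evaluating the induced map on $H_2$ through the factorization $f_* = \hat f_* \circ q_*$. First I would fix generators: by the stated properties, $q_*(1) = (1,1,\dots,1) = \sum_i e_i$ in $\bigoplus_i H_2(\Sigma_i)$, and by definition $\hat f_*(e_i) = \deg f|_{\Sigma_i}$ times the chosen generator of $H_2(S^2)$. Since $f$ is homotopic to $q$ followed by $\hat f$ (because $\hat f$ is the factorization of $f$ through the quotient, and $q$ collapses the connected components of $f^{-1}(N\cup S)$), we have $\deg f \cdot 1 = f_*(1) = \hat f_*(q_*(1)) = \hat f_*\bigl(\sum_i e_i\bigr) = \sum_i \hat f_*(e_i) = \bigl(\sum_i \deg f|_{\Sigma_i}\bigr)\cdot 1$, whence the identity.

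The one point that genuinely needs justification, and which I expect to be the main (minor) obstacle, is the claim that $q_*\colon H_2(S^2)\to H_2(\vee_i\Sigma_i)$ is surjective and that generators can be chosen so that $q_*(1) = (1,1,\dots,1)$. To handle this I would argue as follows. The quotient $\vee_i\Sigma_i = S^2/\!\sim$ is obtained from $S^2$ by collapsing each $X_j$ (an essential circle) and the points $N$, $S$; collapsing a circle on $S^2$ to a point yields a wedge of two spheres (up to homotopy the quotient of an annulus neighborhood), and iterating, $S^2$ becomes a finite "cactus" of spheres joined at points. For the quotient map $q$ collapsing a single essential circle $X_1$ on $S^2$, the long exact sequence of the pair shows $H_2(S^2) \to H_2(S^2/X_1)$ is onto with the fundamental class going to the sum of the two sphere classes, because $S^2 = D_+ \cup_{X_1} D_-$ decomposes the fundamental class as a sum of the two disk classes whose images generate the two summands. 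Iterating this over all the circles (using that the $X_j$ are disjoint and essential, so they form a nested or parallel family cutting $S^2$ into finitely many pieces, each of which becomes a sphere after the collapse), one gets the stated surjectivity and the normalization $q_*(1) = (1,\dots,1)$ for a suitable choice of generators; this is exactly the choice already made in the text.

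Granting that, the remaining steps are purely formal: use naturality of $H_2$ applied to the commuting (up to homotopy) triangle $f = \hat f \circ q$, expand by linearity, and read off $\deg f = \sum_i \deg f|_{\Sigma_i}$. I would also remark that this is consistent with — and in fact a clean packaging of — the local degree additivity recalled at the start of Section 4 (the formula $\deg\sigma = \sum_i \deg\sigma|_{x_i}$), since each $\deg f|_{\Sigma_i}$ records the "degree contributed" by the $i$-th component of $f^{-1}(N\cup S)$, and these components partition the relevant preimage. No serious calculation is required beyond the homological bookkeeping above.
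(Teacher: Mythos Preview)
Your proof is correct and is essentially identical to the paper's: both compute $\deg f = f_*(1) = \hat f_* q_*(1) = \hat f_*(1,\dots,1) = \sum_i \hat f_*(e_i) = \sum_i \deg f|_{\Sigma_i}$. Your additional justification of $q_*(1)=(1,\dots,1)$ is not needed, since the paper states this choice of generators in the paragraph preceding the lemma; also note that $f = \hat f \circ q$ holds on the nose, not merely up to homotopy.
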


\begin{proof}  By definition, $\deg f = f_* (1)$ and $f_* = \hat f _* q_*$, so $f_* (1)= \hat f _* q_* (1) = \hat f_* (1, 1, \ldots, 1) = \sum _i \hat f_* (e_i) =
\sum _i \deg f |_{\Sigma _i}$.
 
\end{proof}
For all $i$ we denote $N_i = \sigma_i ^ {-1} (N)$ and $S_i = \sigma_i ^ {-1} (S)$ and note that $\deg \sigma_i = \deg \sigma_i |_{N_i} = \deg \sigma_i |_{S_i}$.

There is a natural identification $H_2(S^2)\approx H_1 (A)$:
the long exact sequence for the pair $(S^2-N, A)$,

$$\ldots \longrightarrow H_2 (A) \longrightarrow H_2 (S^2-N) \longrightarrow H_2 (S^2-N, A) \longrightarrow H_1 (A) \longrightarrow H_1(S^2-N)\longrightarrow \ldots ,$$\noindent 
gives an isomorphism $H_2 (S^2-N, A) \approx H_1 (A)$ while by excision $ H^2(S^2, S^2-S)\approx H_2(S^2-N, S^2-S-N) = H_2(S^2-N, A) $ where $ H_2(S^2, S^2-S)\approx H_2 (S^2)$ again by
the long exact sequence of pairs.

Note that for each $i$, $U_i = q^ {-1} 
(\Sigma _i \backslash \{N_i, S_i\})$ is an open essential subannulus of $A$ and one may choose a generator of $H_1(U_i)$ such that the morphism induced on homology by the inclusion 
$U_i\hookrightarrow A$
is the identity. Then, choosing these induced generators of $H_1(A)$ and $ H_1(U_i)$
$f|_{U_i}: U_i \to A$ defines an 
{\it annular degree} $\delta_i$.

\begin{lemma}\label{delta} $\delta _i = \pm \deg \sigma_i$
 
\end{lemma}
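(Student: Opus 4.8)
The plan is to relate the two integers $\delta_i$ and $\deg\sigma_i$ by tracking a single generator through the chain of isomorphisms set up just before the statement. Recall that $\sigma_i=\hat f|_{\Sigma_i}\colon\Sigma_i\to S^2$ and that $U_i=q^{-1}(\Sigma_i\setminus\{N_i,S_i\})$ is an open essential subannulus of $A$; the quotient map $q$ restricts to a homeomorphism $U_i\to\Sigma_i\setminus\{N_i,S_i\}$, and under this homeomorphism $f|_{U_i}$ corresponds to $\sigma_i$ restricted to the complement of $\{N_i,S_i\}$, composed with the inclusion of $S^2\setminus(\text{something})$ into $A$. So the first step is to record the commuting square: $f|_{U_i}=\iota\circ(\sigma_i|_{U_i})$ up to the identification $q|_{U_i}$, where $\iota$ collapses appropriately; more precisely $\sigma_i$ sends $\Sigma_i\setminus\{N_i,S_i\}$ into $S^2\setminus\{N,S\}=A$ (after possibly enlarging the target point-preimages — note the discussion in the Degree section that $\deg\sigma_i=\deg\sigma_i|_{N_i}=\deg\sigma_i|_{S_i}$, which is exactly the statement that the annulus map $\sigma_i|_{U_i}\colon U_i\to A$ has degree $\deg\sigma_i$).

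Concretely, I would argue as follows. The paragraph preceding the lemma already observes, via the long exact sequences and excision, that the annulus map $\sigma|_{U-x_i}\colon U-x_i\to V-y$ realizing the local degree at a point has degree equal to $\deg\sigma|_{x_i}$; applied here with $\sigma=\sigma_i$ and the point $N_i$ (a single point of $\Sigma_i$, since each $\Sigma_i$ is a sphere meeting $f^{-1}(N\cup S)$ in the two marked points $N_i,S_i$), this gives that the degree of $\sigma_i|_{\Sigma_i\setminus\{N_i,S_i\}}\colon \Sigma_i\setminus\{N_i,S_i\}\to S^2\setminus\{N,S\}=A$ equals $\deg\sigma_i$. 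Now $q$ carries $U_i$ homeomorphically onto $\Sigma_i\setminus\{N_i,S_i\}$ and carries $f|_{U_i}$ to $\sigma_i|_{\Sigma_i\setminus\{N_i,S_i\}}$; hence $f|_{U_i}\colon U_i\to A$ has degree $\pm\deg\sigma_i$, the sign ambiguity coming only from the fact that the chosen generator of $H_1(U_i)$ (induced from the inclusion $U_i\hookrightarrow A$ being the identity) need not match, up to sign, the generator of $H_1(\Sigma_i\setminus\{N_i,S_i\})$ that the excision/long-exact-sequence identification of $H_2(\Sigma_i)$ with $H_1(\Sigma_i\setminus\{N_i,S_i\})$ produces. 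By definition $\delta_i$ is precisely the degree of $f|_{U_i}\colon U_i\to A$ with respect to the inclusion-induced generators, so $\delta_i=\pm\deg\sigma_i$.

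I would present this as: (1) identify $H_2(\Sigma_i)\approx H_1(\Sigma_i\setminus\{N_i,S_i\})$ compatibly with the already-fixed identification $H_2(S^2)\approx H_1(A)$, using naturality of the long exact sequence of a pair applied to $q$; (2) use the cited fact $\deg\sigma_i=\deg\sigma_i|_{N_i}$ to pass from the degree-$2$-sphere statement to the degree-$1$-annulus statement for $\sigma_i$; (3) transport along the homeomorphism $q|_{U_i}$ to conclude the degree of $f|_{U_i}$ equals that of $\sigma_i|_{U_i}$ up to the choice of generator; (4) observe the two possible generators of $H_1(U_i)\cong\Z$ differ by a sign, yielding $\delta_i=\pm\deg\sigma_i$.

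The main obstacle, and the only place requiring care, is the bookkeeping of orientations/generators in step (1)–(3): one must check that the identification $H_2(\Sigma_i)\approx H_1(U_i)$ coming from the pair sequences is genuinely compatible with the map $q$ and with the ambient identification $H_2(S^2)\approx H_1(A)$, so that no spurious extra integer (beyond $\pm1$) is introduced. I expect this to be routine diagram-chasing with the long exact sequences and excision isomorphisms already written down in the Degree section, exploiting that each component $\Sigma_i$ is a sphere with exactly two marked points $N_i,S_i$ so that $\Sigma_i\setminus\{N_i,S_i\}$ is itself an annulus; the sign genuinely cannot be pinned down without fixing orientations of the $\Sigma_i$, which is why the statement is phrased with $\pm$.
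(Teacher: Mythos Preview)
Your proposal is correct and follows essentially the same route as the paper's proof: both use the identification of $\deg\sigma_i$ with the local degree at one of the marked points ($S_i$ in the paper, $N_i$ in your version), pass to the associated annulus map via the excision/long-exact-sequence identification already recorded in the Degree section, and attribute the $\pm$ solely to the possible mismatch between the chosen generator of $H_1(U_i)$ and the one induced from $H_2(\Sigma_i)$. Your write-up is more explicit about the diagram chase and the role of $q|_{U_i}$, but the content is the same.
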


\begin{proof}  Recall that $\deg \sigma_i = \deg \sigma_i |_{S_i} = \pm \delta _i$ where the change of sign may occur if the generator of  $H_1(U-S_i)\approx H_2 (\Sigma _i)$ has the 
opposite sign as that of $H_1 (U_i)$.
(Here we have taken $U$ a neighbourhood of $S_i$ on $\Sigma _i$ as in the definition of local
degree).

\end{proof}

\section{Proof}

We devote this section to the proof of:\\

{\bf Theorem A.}
{\em Let $f:S^2\to S^2$ be a continuous map such that $\deg f = d, |d|>1$.  Suppose $f$ has two attracting fixed points denoted $N$ and $S$ and let $A=S^2\setminus \{N,S\}$.
Assume that if a loop $\gamma\subset f^{-1}(A)$ is homotopically trivial in $A$, then $f(\gamma)$ is also 
homotopically trivial in $A$.
Then $f$ has the rate.}\\

\begin{proof}
 Let $d_i = \deg \sigma_i$.  The content of Lemma \ref{suma} is $\deg f = \sum _i d_i$, and Lemma \ref{delta} gives $\delta _i = \pm d_i$.  Furthermore, Theorem \ref{t3} gives
 $\sum _i |\delta _i -1| \leq \# \fix (f)$.  Suppose $d_i>|\delta_i-1|$.  It follows that $d_i=\delta_i >0$ and $d_{i+1} = -\delta _{i+1}$.  We claim that $|\delta_{i+1}-1|> d_{i+1}$:  if
 $d_{i+1}<0$ this holds trivially, and if $d_{i+1}\geq 0$, then $\delta_{i+1}\leq 0$ which implies $|\delta_{i+1}-1|> d_{i+1}$.
 We conclude that $d_i+d_{i+1}\leq |\delta_i-1| + |\delta_{i+1}-1|$.  
 
 To finish the proof,  note that every positive iterate of $f$ verifies the  hypothesis of the theorem to  obtain $d^n \leq \# \fix (f^n)$.
\end{proof}

\end{document}